\pgfplotsset{compat=1.15}
\theoremstyle{plain}
\newtheorem{thrm}{Theorem}[section]
\newtheorem{cor}[thrm]{Corollary}
\newtheorem{lem}[thrm]{Lemma}
\theoremstyle{definition}
\newtheorem{rem}[thrm]{Remark}
\newtheorem{exm}[thrm]{Example}
\crefname{thrm}{Theorem}{Theorems}
\crefname{theorem}{Theorem}{Theorems}
\crefname{lem}{Lemma}{Lemmas}
\crefname{cor}{Corollary}{Corollaries}
\crefname{prop}{Proposition}{Propositions}
\crefname{defn}{Definition}{Definitions}
\crefname{exm}{Example}{Examples}
\crefname{rem}{Remark}{Remarks}
\crefname{conj}{Conjecture}{Conjectures}
\crefname{quest}{Question}{Questions}
\crefname{section}{Section}{Sections}
\crefname{equation}{\unskip}{\unskip}
\crefname{enumi}{\unskip}{\unskip}
\crefname{subsection}{Subsection}{Subsections}
\newcommand{\lb}{\lambda}
\newcommand{\vf}{\varphi}
\newcommand{\dl}{\delta}
\newcommand{\CC}{\mathbb{C}}
\newcommand{\sst}{\subseteq}
\newcommand{\impl}{\Rightarrow}
\begin{document}
	\title[Bilinear maps  having Jordan product property]{Bilinear maps having Jordan product property}	
	
	\author{Jorge J. Garc{\' e}s}
	\address{Departamento de Matem{\' a}tica Aplicada a la Ingenier{\' i}a Industrial, ETSIDI, Universidad Polit{\' e}cnica de Madrid, Madrid, Spain}
	\email{j.garces@upm.es}
	
	\author{Mykola Khrypchenko}
	\address{Departamento de Matem\'atica, Universidade Federal de Santa Catarina, Campus Trindade, Florian\'opolis, SC, CEP: 88040--900, Brazil}
	\email{nskhripchenko@gmail.com}

	\subjclass[2020]{Primary: 15A86, 47C15, 17C65; secondary: 17A36, 47B47}
	\keywords{Jordan product property, square-zero property, von Neumann algebra, Jordan homomorphism, Jordan derivation}
	
	\begin{abstract}
		We study symmetric continuous bilinear maps $V$ on a C$^*$ -algebra $A$ that have the Jordan product property at a fixed element $z\in A$. We show that, whenever $A$ is a finite direct sum of infinite simple von Neumann algebras, such a map $V$ has the square-zero property. Then, it is proved that $V(a,b)=T(a\circ b)$ for some bounded linear map $T$ on $A$. As a consequence, Jordan homomorphisms and derivations at $z\in A$ are characterized.
	\end{abstract}
	
	\maketitle
	
	\tableofcontents
	
	\section*{Introduction}
	Let $A,B$ be algebras and $\vf:A\to B$ an additive map. We say that $\vf$ \textit{preserves square-zero elements} or \textit{is a  square-zero preserver} if $\vf$ satisfies the property
	\begin{align}\label{vf sqaure-zero}
		a^2=0\Rightarrow \vf(a)^2=0.  
	\end{align}
	If in \cref{vf sqaure-zero} we have ``$\Leftrightarrow$'' instead of ``$\Rightarrow$'', then we say that $\vf$ \textit{strongly preserves square-zero elements}.
	In \cite{Semrl-square-zero}, {\v{S}}emrl  showed that a unital additive surjective map $\vf:B(H)\to B(H)$ that strongly preserves square-zero elements is either an automorphism or an antiautomorphism. The author leaves as an open problem to obtain a non-unital version of his result. In \cite{Bai_Hou_Annih_Polyn},  Bai and Bou gave an affirmative answer to {\v{S}}emrl's problem by showing that a strong square-zero preserving bijection $\vf:B(H)\to B(K)$ is a multiple of a Jordan isomorphism. Moreover, they also showed that if $\vf$ is continuous, then strongness can be dropped while bijectivity can be relaxed to surjectivity (see \cite[Theorem 2.3]{Bai_Hou_Annih_Polyn}). Derivation-like mappings can also be studied in terms of square-zero elements. A linear map $\delta:A\to X$ (where $A$ is an algebra and $X$ an $A$-bimodule) is said to be a \textit{square-zero derivation} if 
	$$
	a^2=0\impl a\delta(a)+\delta(a)a=0
	$$ 
	as defined in \cite{Sq-Zer-det}. 
	
	Paralleling the theory of zero product determined algebras, square-zero determined algebras were introduced in \cite{Sq-Zer-det} by Ma, Ding and Wang.  An algebra $A$ is said to be \textit{square-zero determined} if for every vector space $X,$ every bilinear map $V:A\times A\to X$ that has the property  
	\begin{align}\label{def bilinear sq-zero}
		a^2=0 \Rightarrow V(a,a)=0 
	\end{align} 
	(called \textit{square-zero property}) is of the form $V(a,b)=T(a\circ b)$ for some linear map $T:A\to X.$ Here and below by $a\circ b$ we mean $ab+ba$. In the aforementioned contribution, the authors show that the algebra of all $n\times n$ strictly upper triangular matrices $N_n(R)$ over a commutative unital ring $R$ such that $\frac{1}{2}\in R$ is square-zero determined. Bilinear maps that have the square-zero property provide a unified approach to both square-zero preserving linear maps and square-zero derivations.  Although square-zero preservers have been considered by many authors (see, for instance, \cite{Semrl-square-zero,zbMATH02196733,zbMATH05682732}), to our knowledge the only known examples of square-zero determined algebras are those considered in \cite{Sq-Zer-det}. In \cref{thm BH square-zero determined} we show that certain von Neumann algebras (in particular, $B(H)$ and sums of $B(H)$'s with $H$ infinite-dimensional Hilbert spaces) are square-zero determined.
	
	After studying square-zero determined algebras, we introduce algebras determined by Jordan products at a fixed point. This concept is the Jordan version of algebras determined by products at a fixed element introduced in \cite{Gar_Myk_bil-comp} and is inspired by the theory of zero-product determined algebras (see \cite{Bresar-zpd}). As in the case of zero-product determined algebras, this theory is introduced in order to deal, in an unified manner, with Jordan homomorphisms and derivations at a fixed element. Recall that a linear map $\delta :A\to X$ (where $A$ is an algebra and $X$ is an $A$-bimodule) is said to be a \textit{derivation} (or \textit{derivable}) at some $z\in A$ if 
	\begin{align}\label{a-circ-b=a=>dl(a-circ-b)=a-circ-dl(b)+b-circ-dl(a)}
		a\cdot b=z\impl\delta(a\cdot b)=a\cdot \delta(b)+\delta(a)\cdot b.    
	\end{align}
	Replacing $\cdot$ by $\circ$ in \cref{a-circ-b=a=>dl(a-circ-b)=a-circ-dl(b)+b-circ-dl(a)}, one gets a \textit{Jordan derivation} (or a \textit{Jordan derivable map}) at $z$. This concept probably appeared (with $a^2$ instead of $a\circ b$) for the first time in \cite{Wu-all-deriv-0-1}, where Wu proved that every Jordan derivation at $0$ or $1$ on $B(H)$ is a derivation. In the more general setting of von Neumann algebras, the authors study in \cite{Dol-Kuz-Qi-jorda-der} linear maps on $B(H)$ that are Jordan derivable at an idempotent with infinite rank and co-rank. In the case of linear maps on a unital Banach algebra with values in a unital Banach $A$-bimodule, it is known that every Jordan derivable map at the unit element is a Jordan derivation (see \cite[Corollary 2.2]{jordn-der-inv-banach-alg}). Until very recently, most results on derivable maps deal with special elements like $0,1$ or an idempotent or very specific algebras like $B(H)$, triangular algebras or nest algebras. In \cite{Chen_jordan_der} Chen, Guo, and Qin showed that every bounded linear map on $B(H)$ that is Jordan derivable at a nonzero operator is a derivation. To our knowledge, the case of Jordan derivable maps at zero is the one where more general results can be found. Indeed, every linear map on a C$^*$-algebra $A$ with values in an essential Banach $A$-bimodule $X$ that is Jordan derivable at zero, is of the form $d+G$ where $d$ is a derivation and $G$ is a bimodule homomorphism (see \cite[Theorem 4.1 and Remark 4.2]{Al-Bres-Ex-VIll-jordanzero} or \cite[Theorem 8.6]{Bresar-zpd}).
	
	Now we move our interest to generalizing the concept of a Jordan homomorphism. Hence we say that a linear map $\varphi:A\to B$ between two algebras is a \textit{Jordan homomorphism at $z\in A$} if for all $a,b\in A$: 
	$$
	a\circ b=z\impl\vf(a)\circ \vf(b)=\vf(z).
	$$ 
	If $z=0$, then we say that $\vf$ \textit{preserves zero Jordan products}. It follows from the result by Catalano, Hsu and Kapalko~\cite{Catalano-Hsu-Kapalko19} that a Jordan homomorphism $\vf:M_n(\CC)\to M_n(\CC)$ at a fixed element of $M_n(\CC)$ preserves square-zero matrices. If, moreover, $\vf$ is bijective, then \cite[Corollary 2]{Semrl93-square-zero} implies that $\vf$ is a nonzero multiple of an automorphism or an antiautomorphism of $M_n(\CC)$.
	
	In this paper, we study a partial generalization of {\v{S}}emrl's problem. We consider symmetric bilinear maps $V:A\times A\to X$
	having square-zero property, i.e. satisfying \cref{def bilinear sq-zero}. An algebra $A$ is said to be \textit{symmetrically square-zero determined} if for any symmetric bilinear map $V:A\times A\to X$ that has square-zero property there exists a linear map $T:A\to X$ such that $V(a,b)=T(a\circ b)$.
	
	In \cref{sec-prelim}, we give an example that motivates the choice of algebras under consideration and proceed with the definitions that deal with the Jordan product.
	
	In \cref{sec-sq-zero}, we study symmetric bilinear maps that have the square-zero property on infinite simple von Neumann algebras. We show in \cref{thm BH square-zero determined} that such maps preserve zero products of symmetric elements. As a consequence, infinite simple von Neumann algebras, as well as their finite direct sums and $c_0$-sums, are symmetrically square-zero determined Banach algebras.
	
	\cref{sec-Jordan-property} is devoted to symmetric bilinear maps having the Jordan product property at a fixed element. It turns out by \cref{lem transfer pp normal to sq-zero} that such maps on Rickart C$^*$-algebras have the square-zero property. So, using the results of \cref{sec-sq-zero}, we deduce in \cref{cor sums A simple determined any} that finite direct sums of simple infinite von Neumann algebras are symmetrically determined by Jordan products at any fixed element. 
	
	In \cref{sec-appl} we apply the results of \cref{sec-sq-zero,sec-Jordan-property} to suitable symmetric bilinear maps in order to get descriptions of square-zero preservers and Jordan homomorphisms at a fixed point on a finite direct sum of infinite von Neumann algebras (\cref{thrm vf pp at square zero}). It turns out that the results on square-zero preservers can be extended to $c_0$-sums of infinite simple von Neumann algebras (\cref{Jordan-homo-at-point-c_0-sum}). Square-zero derivations and Jordan derivable maps at a fixed point  are also treated in the case of finite sums of infinite simple von Neumann algebras in \cref{thrm derivable maps,cor derivable maps X=A}.

	\section{Preliminaries}\label{sec-prelim}
	
	In what follows, we shall restrict ourselves to certain classes of C$^*$-algebras. Observe that, by Gelfand theory, an abelian C$^*$-algebra has no nonzero square-zero elements. Thus, we shall focus on noncommutative C$^*$-algebras. In \cite{Sq-Zer-det} it is shown that $M_3(\CC)$ is not symmetrically square-zero determined. Actually, this example can be easily adapted to show that no noncommutative finite-dimensional C$^*$-algebra is symmetrically square-zero determined.
	\begin{exm}\label{nonexample}
		Let $A$ be a noncommutative finite-dimensional C$^*$-algebra. There exist $n_1, \dots,n_k\in \mathbb{N}$ such that $A\cong M_{n_1}(\CC)\oplus \dots \oplus M_{n_k}(\CC).$ Since $A$ is not commutative, we can assume, without loss of generality,  that $n_1>1.$ Let $\pi_i:A\to M_{n_i}$ be the canonical projections. Let us define $V:A\times A\to \CC$ by 
		$$
		V(a,b)=tr(\pi_1(a))tr(\pi_1(b)).
		$$ 
		Suppose that there exists a linear map $T:A\to \CC$ such that $V(a,b)=T(a\circ b).$ If we take $a\in A$ such that $\pi_1(a)=I_{n_1}$, then for all $b,c\in A$ we  have 
		$$n_1 tr(\pi_1( b\circ c))=V(a,b\circ c)=T( I_{n_1}\circ(b\circ c))=2T(b\circ c)=2tr(\pi_1(b)) tr(\pi_1(c)).$$
		Take $b\in A$ such that $\pi_1(b)_{11}=1,\pi_1(b)_{22}=-1$ and let the rest of the entries of $\pi_1(b)$ be zero. Set $c=b.$ Then we have 
		$$4n_1= n_1tr(\pi_1( b\circ b))=2tr(\pi_1(b)) tr(\pi_1(b))=0,$$ a contradiction. Thus, $A$ is not symmetrically square-zero determined.
		
	\end{exm}
	
	\begin{rem}
		It is easy to realize,  in view of \cref{nonexample}, that any algebra that has a direct summand of the form $M_n(\CC)$ with $n>1$ is not symmetrically square-zero determined.
	\end{rem}
	
	Let $A$ be an associative algebra and $z\in A$. We say that $A$ is \textit{determined by Jordan products at $z$} (resp., \textit{symmetrically determined by Jordan products at $z$}) if for every vector space $X$ and bilinear (resp., symmetric bilinear) map $V:A\times A\to X$ satisfying 
	\begin{align}\label{a-circ-b=z=>V(a_b)=const}
		a\circ b=a'\circ b'=z \Rightarrow V(a,b)=V(a',b')
	\end{align}
	there exists a linear map $T:A\to X$ such that $V(a,b)=T(a\circ b)$, for all $a,b\in A.$ 
	
	We say that $A$ is \textit{zero Jordan product determined} or \textit{zJpd} (resp., \textit{symmetrically  zJpd}) if $A$ is (symmetrically) determined by Jordan products at $z=0.$ It is easy to see that every (symmetrically) square-zero determined associative algebra is (symmetrically) zJpd.

	If $A$ is a Banach algebra, then in the above definitions we assume $X$ to be a Banach space, $V$ a bounded bilinear map $A\times A\to X$ and $T$ a bounded linear map $A\to X$ in order to define \textit{(symmetrically) square-zero determined Banach algebras, zJpd determined Banach algebras or Banach algebras determined by Jordan products at $z$.} 
	
	
	\section{Square-zero determined von Neumann algebras and Jordan product property}\label{sec-sq-zero}
	
	For an algebra $A$ we denote by $Z(A)$ the center of $A.$ A C$^*$-algebra $A$ is said to be a \textit{von Neumann algebra} if it is dual to a Banach space (called \textit{predual} of $A$). It is known that each von Neumann algebra has a unique (up to isometry) predual \cite[Chapter III, Corollary 3.9]{Takesaki}. A von Neumann algebra is a \textit{factor} if $Z(A)= \CC 1,$ which by \cite[Proposition 1.10.5]{Sakai_Book} is equivalent to being simple as a von Neumann algebra.
	
	We denote by $P(A)$ the set of projections of the C$^*$-algebra $A.$ If $p,q\in P(A)$ we define $p\leq q$ iff $pq=p.$ An element $u\in A$ is said to be a \textit{partial isometry} if $uu^*$ is a projection (equivalently, if $u^*u$ is a projection). 
	Two projections $p,q$ in a C$^*$-algebra $A$ are said to be (Murray-von Neumann) \emph{equivalent}, $p\sim q$, if $p = uu^*$ and $q= u^*u$ for some partial isometry $u \in A$. Following \cite{Kad_Ring_2} if there exists $q_1\leq q$ such that $p\sim q_1$ then we write $p \precsim  q.$ We write $p\prec q$ to denote $p \precsim q$ and $p\nsim q.$ A projection $p$ is \textit{finite} if $p\sim q\leq p$ implies $q=p.$ Otherwise, $p$ is said to be \textit{infinite}. An infinite projection is \textit{properly infinite} if for $z\in P(Z(A))$ we have $zp \mbox{ finite }\impl zp=0.$
	
	A C$^*$-algebra is said to be \textit{finite}, \textit{infinite} or \textit{properly infinite} according to the respective property of its identity element.

	\begin{lem}\label{lem finite-infinite}
		Let $A$ be an infinite von Neumann algebra. 
		\begin{enumerate} 
			\item\label{fin+fin} If $p$ and $q$ are finite projections and $pq=0$, then $p+q$ is a finite projection.
			\item\label{1-fin} If $p$ is a finite projection, then $1-p$ is an infinite projection.
			\item\label{fin+inf} If $p$ is a finite projection, $q$ is an infinite projection and $pq=0$, then $p+q$ is an infinite projection.
			\item\label{factor prop inf} If $A$ is a factor, then every infinite projection in $A$ is properly infinite.
		\end{enumerate}
	\end{lem}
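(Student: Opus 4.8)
The plan is to treat part \ref{fin+fin} as the one substantive statement and to deduce \ref{1-fin}, \ref{fin+inf} and \ref{factor prop inf} from it together with two elementary facts of Murray--von Neumann comparison theory: (a) a subprojection of a finite projection is finite, and (b) a projection equivalent to a finite one is finite. Both are immediate from the definitions. For (a), if $q\le e$ with $e$ finite and $q\sim q'\le q$ via a partial isometry $u$ (so $u^*u=q$, $uu^*=q'$), I would set $w=u+(e-q)$; using $q'\le q\le e$ one checks $w^*w=e$ and $ww^*=q'+(e-q)\le e$, so $e\sim q'+(e-q)\le e$ and finiteness of $e$ forces $q'+(e-q)=e$, i.e.\ $q'=q$. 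Fact (b) is proved the same way, transporting a would-be proper equivalent subprojection through the equivalence.

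For part \ref{fin+fin} — additivity of finiteness for orthogonal projections — I would either invoke the classical result directly (for instance \cite{Kad_Ring_2}) or reproduce the comparison-theory argument: assuming $e:=p+q$ infinite, pick a partial isometry $v$ with $v^*v=e$, $vv^*=f$, $f\neq e$, and transport the decomposition, $f=vpv^*+vqv^*=:p'+q'$ with $p\sim p'$, $q\sim q'$, $p'q'=0$ and $p'+q'=f<e$; then apply the comparison theorem to split $e$ by a central projection so that, on each piece, one of $p,q$ dominates the corresponding piece of the ``defect'' $e-f$, and rearrange the partial isometries to exhibit a proper subprojection of $p$ (respectively $q$) equivalent to $p$ (respectively $q$), contradicting finiteness. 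The only delicate point is the bookkeeping with the central projection and the partial isometries; in a paper of this style it is cleanest simply to cite it.

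Granting \ref{fin+fin}, the remaining parts are one-liners. For \ref{1-fin}: if $1-p$ were finite, then since $p(1-p)=0$, part \ref{fin+fin} would make $1=p+(1-p)$ finite, contradicting that $A$ is infinite. For \ref{fin+inf}: from $pq=0$ one gets $qp=0$, hence $q(p+q)=q$, i.e.\ $q\le p+q$; so if $p+q$ were finite, fact (a) would force $q$ finite, contrary to hypothesis. For \ref{factor prop inf}: a factor has $Z(A)=\CC 1$, hence $P(Z(A))=\{0,1\}$, and the defining condition ``$zp$ finite $\impl zp=0$'' holds trivially for $z=0$ and vacuously for $z=1$, since $1\cdot p=p$ is infinite by assumption. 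So the only real obstacle is \ref{fin+fin}, which is entirely classical; the point of the lemma is organizational, collecting the projection-theoretic facts used later.
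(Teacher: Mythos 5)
Your proposal is correct and follows essentially the same route as the paper: part (i) is the classical additivity of finiteness for orthogonal finite projections (the paper cites Takesaki via $p\vee q=p+q$ where you offer to cite or reproduce the comparison argument), part (ii) is the identical one-liner, part (iii) rests on the fact that a subprojection of a finite projection is finite (the paper cites Cuntz and Berberian for finiteness passing through the corner $eAe$, whereas you prove the hereditary property directly with the partial isometry $w=u+(e-q)$, which checks out), and part (iv) is the same observation that $P(Z(A))=\{0,1\}$ in a factor. One small point in your favour: in (iii) you correctly derive the contradiction from $q\le p+q$ forcing the infinite projection $q$ to be finite, whereas the paper's text applies the argument to $p$, which is a slip since $p$ is finite by hypothesis.
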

	\begin{proof}
		\cref{fin+fin} is a direct consequence of \cite[\S V, Theorem 1.37]{Takesaki}   since $p \vee q=p+q.$ 
		
		\cref{1-fin} If $1-p$ is finite, then $1=p+(1-p)$ is finite by \cref{fin+fin}, a contradiction since $A$ is assumed to be infinite.
		
		\cref{fin+inf} Suppose that $e=p+q$ is finite. Then $p\leq e$ implies that $p$ is finite in $eAe$ by \cite[Proposition 2.4]{Cuntz77} and hence $p$ is finite in $A$ by \cite[\S 15, Proposition 1]{Berberian_Book}, a contradiction.
		
		\cref{factor prop inf} Since $1$ is the only central projection, then $1$ is clearly properly infinite. Now, if $p\in P(A)$ is an infinite projection, then $1\cdot p$ is infinite, thus $p$ is properly infinite.
	\end{proof}
	For an algebra $A$, we denote by $N_2(A)$ the set of square-zero elements of $A$. The following property is inspired by the proof of \cite[Theorem 2.1]{Semrl-square-zero}:
	
	\begin{lem}\label{prop orth sqzero spans} Let $A$ be an algebra and $a=\sum_{i=1}^r a_i,b=\sum_{j=1}^s b_j$, where $\{a_i\}_{i=1}^r$, $\{b_j\}_{j=1}^s\sst N_2(A)$ and $a_ib_j=b_ja_i=0,\forall i,j$. Let $X$ be a vector space and $V:A\times A\to X$ be a symmetric bilinear map that has square-zero property. Then $V(a,b)=0.$
	\end{lem}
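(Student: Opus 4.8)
The plan is to reduce, by bilinearity, to the single-term case and then run a polarization argument. Since $V$ is bilinear,
\[
V(a,b)=V\Bigl(\sum_{i=1}^{r}a_i,\sum_{j=1}^{s}b_j\Bigr)=\sum_{i=1}^{r}\sum_{j=1}^{s}V(a_i,b_j),
\]
so it suffices to prove that $V(a_i,b_j)=0$ for each fixed pair $(i,j)$.

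Fix such a pair. The observation that makes everything work is that $a_i+b_j$ is again a square-zero element: expanding and using $a_i^2=b_j^2=0$ (because $a_i,b_j\in N_2(A)$) together with the hypothesis $a_ib_j=b_ja_i=0$, we get
\[
(a_i+b_j)^2=a_i^2+a_ib_j+b_ja_i+b_j^2=0.
\]
Now apply the square-zero property of $V$ to $a_i$, to $b_j$, and to $a_i+b_j$; expanding the last expression by bilinearity and using the symmetry of $V$,
\[
0=V(a_i+b_j,a_i+b_j)=V(a_i,a_i)+2V(a_i,b_j)+V(b_j,b_j)=2V(a_i,b_j),
\]
whence $V(a_i,b_j)=0$, the scalars having characteristic different from $2$. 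Summing over $i$ and $j$ gives $V(a,b)=0$.

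There is no genuine obstacle in this argument; the two points to keep track of are that the hypothesis $a_ib_j=b_ja_i=0$ is precisely what kills the cross terms in $(a_i+b_j)^2$, so that the square-zero property can legitimately be invoked on $a_i+b_j$, and that the symmetry of $V$ is what turns $V(a_i,b_j)+V(b_j,a_i)$ into $2V(a_i,b_j)$. Note that no orthogonality among the $a_i$ themselves (or among the $b_j$ themselves) is used — only the ``cross'' orthogonality between the two families.
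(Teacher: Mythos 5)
Your proof is correct and follows essentially the same route as the paper's: both reduce to a single pair $(a_i,b_j)$ by bilinearity, note that the cross-orthogonality makes $(a_i+b_j)^2=0$, and then apply the square-zero property together with symmetry to conclude $2V(a_i,b_j)=0$. Your write-up is just a more detailed version of the paper's argument, with the same key observation and no substantive difference.
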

	\begin{proof}
		It is clear that $(a_i+b_j)^2=0,\forall i,j.$ We have
		$$0=V(a_i+b_j,a_i+b_j)=V(a_i,a_i)+V(b_j,b_j)+2V(a_i,b_j)=2V(a_i,b_j) $$ which yields $V(a_i,b_j)=0.$ By additivity we have $V(a,b)=0.$
	\end{proof}
	
	\begin{lem}\label{lem separate orth sqzero}
		Let $A$ be a von Neumann algebra, $X$ a vector space and $V:A\times A$ a symmetric bilinear map that has the square-zero property. If $p,q\in P(A)$ are properly infinite and $pq=0$ then $V(a,b)=0$ for all $a \in pAp$ and $b\in qAq.$
		
	\end{lem}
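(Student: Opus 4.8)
The plan is to deduce the statement from \cref{prop orth sqzero spans} together with the purely structural fact that a von Neumann algebra whose identity is properly infinite is the linear span of its square-zero elements. First note that $pq=0$ forces $pAp\cdot qAq=qAq\cdot pAp=\{0\}$: for $x\in pAp$ and $y\in qAq$ one has $xy=pxp\,qyq=px(pq)yq=0$ and likewise $yx=0$. Hence for arbitrary square-zero elements $a_1,\dots,a_r\in pAp$ and $b_1,\dots,b_s\in qAq$ the hypotheses $a_ib_j=b_ja_i=0$ of \cref{prop orth sqzero spans} hold automatically, and if $a=\sum_i a_i$, $b=\sum_j b_j$ are such finite decompositions then $V(a,b)=\sum_{i,j}V(a_i,b_j)=0$. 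So it suffices to know that every element of $pAp$ (and, by the same token, of $qAq$) is a finite sum of square-zero elements of that corner.

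To prove this I would exploit that $p$, being properly infinite, can be halved: there are orthogonal projections $e,f$ with $e+f=p$ and $e\sim f\sim p$, hence partial isometries $s,t\in pAp$ with $s^*s=t^*t=p$, $ss^*=e$, $tt^*=f$, $ss^*+tt^*=p$ and $s^*t=t^*s=0$. For $x\in pAp$, expanding $x=(ss^*+tt^*)x(ss^*+tt^*)$, the two ``off-diagonal'' terms $s(s^*xt)t^*$ and $t(t^*xs)s^*$ are square-zero (because $t^*s=s^*t=0$), reducing matters to the ``block-diagonal'' part $s(s^*xs)s^*+t(t^*xt)t^*$. Here one uses the identity
\[
scs^*-tct^*=\tfrac12(s-t)\,c\,(s+t)^*+\tfrac12(s+t)\,c\,(s-t)^*,
\]
whose two summands are square-zero since $(s+t)^*(s-t)=(s-t)^*(s+t)=0$; applied with $c=s^*xs$ it yields $s(s^*xs)s^*+t(t^*xt)t^*=[\text{square-zero terms}]+t\bigl(s^*xs+t^*xt\bigr)t^*$, i.e.\ it reduces $x$, modulo finitely many square-zero elements, to an element of the strictly smaller corner $fAf$. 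A further ingredient is that a positive element $y$ of a properly infinite corner factors as $y=cc^*$ with $c$ square-zero (take a partial isometry $w$ carrying $p$ onto a projection orthogonal to and equivalent to the support of $y$ — using proper infiniteness, after one more halving if $y$ has full support — and put $c=y^{1/2}w^*$); together with a Cartesian decomposition of the residual pieces this should let one assemble a finite expression of $x$ as a sum of square-zero elements.

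The step I expect to be the main obstacle is making this reduction actually \emph{terminate}: iterating the halving naively only shows that $x$ is, modulo a growing finite number of square-zero elements, supported on an arbitrarily small subprojection — not that it is itself a finite sum of square-zero elements — so the argument must be organized so that the ``diagonal'' remainder is genuinely absorbed after finitely many steps rather than merely pushed into a smaller corner. Proper infiniteness is what must be exploited to close this gap, and it is genuinely indispensable: the conclusion fails for finite von Neumann algebras — already $1\in M_n(\CC)$ is not a finite sum of square-zero matrices (square-zero matrices are traceless, while $\mathrm{tr}(1)=n$), and in fact $1$ is never a sum of three or fewer square-zero elements in any ring in which $2$ is invertible. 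Once the spanning assertion is secured for $pAp$ and for $qAq$, the first paragraph immediately gives $V(a,b)=0$ for all $a\in pAp$, $b\in qAq$.
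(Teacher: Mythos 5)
Your first paragraph is exactly right and matches the paper: since $pq=0$, any square-zero elements of the corners $pAp$ and $qAq$ automatically satisfy the mutual-annihilation hypotheses of \cref{prop orth sqzero spans}, so everything reduces to showing that each element of a properly infinite corner is a \emph{finite} sum of square-zero elements of that corner. The problem is that you never establish this spanning fact, and you say so yourself: your halving scheme replaces the diagonal remainder $s(s^*xs)s^*+t(t^*xt)t^*$ by an element of $fAf$, but $fAf\cong pAp$ (since $f\sim p$), so the reduction is a loop that never terminates, and the number of square-zero summands grows without the remainder ever being absorbed. As written, the key lemma on which the whole argument rests is left unproved, so the proof is incomplete.

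The paper closes precisely this gap by quoting \cite[Theorem 5]{Pearcy-Topping-SmallSums}: in a properly infinite von Neumann algebra every element is a sum of \emph{five} square-zero elements. That result is not obtained by finite halving; the standard proof uses an infinite family of mutually orthogonal subprojections of $p$, each equivalent to $p$ (available exactly because $p$ is properly infinite), together with a shift-type argument that absorbs the diagonal remainder in one stroke rather than pushing it into a smaller corner. Your instinct that proper infiniteness is indispensable is correct (as your trace argument in $M_n(\CC)$ shows), but to make your proposal a proof you must either cite the Pearcy--Topping theorem or reproduce an argument of that infinite-decomposition type; the finite two-fold halving cannot be organized to terminate.
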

	\begin{proof}
		Fix $a\in pAp$ and $b\in qAq.$ Since both $pAp$ and $qAq$ are properly infinite von Neumann algebras, then by \cite[Theorem 5]{Pearcy-Topping-SmallSums} there exist $a_1,\dots,a_5\in N_2(pAp)\subset N_2(A)$ and $b_1,\dots,b_5 \in N_2( qAq)\subset N_2(A)$ such that $a=a_1+\dots +a_5$ and  $b=b_1+\dots +b_5.$ It is clear that $a_i b_j =b_ja_i=0,\forall i,j.$ The result now follows from \cref{prop orth sqzero spans}.
	\end{proof}
	Let $A$ be a C$^*$-algebra and let $a,b\in A.$  We say that $a$ and $b$ are \textit{orthogonal} (resp., \textit{have zero Jordan product}) if $ab^*=b^*a=0$ (resp., $a\circ b=0$). Set $A_{sa}=\{a\in A:a^*=a\}.$ It is clear that if $a,b\in A_{sa}$ are orthogonal, then $a\circ b=0.$ However, the converse is not true. In case $a\geq 0$ and $b\in A_{sa}$ by \cite[Lemma 4.1]{OP_REV} we have that $a$ is orthogonal to $b$ if and only if $a\circ b=0$ if and only if $ab=0.$
	
	Let $A$ be an algebra and $a\in A$. We denote by $L(\{a\})$ (resp, $R(\{a\})$) the \textit{left} (resp., \textit{right}) \textit{annihilator} of $a.$ Now let $A$ be a von Neumann algebra and $a\in A_{sa}.$ The \textit{support projection} of $A,$ denoted $s(a),$ is the smallest among all projections $p\in P(A)$ such that $ap=pa=a$
	(see \cite[Definition 1.10.3 and Theorem 1.12.1]{Sakai_Book}). Moreover, we also have $R(\{a\})=(1-s(a))A$ and $L(\{a\})=A(1-s(a)).$
	
	The next result shows that every simple infinite von Neumann algebra is a symmetrically square-zero determined Banach algebra.
	
	\begin{thrm}\label{thm BH square-zero determined} Let $X$ be a vector space, $A$ a simple infinite von Neumann algebra and let $V:A \times A\to X$ be a symmetric bilinear map that has square-zero property. Then
		\begin{align}\label{ab=0=>V(a_b)=0-for-symm-a_b}
			\forall a,b\in A_{sa}:\ ab=0\impl V(a,b)=0.
		\end{align}
		Moreover, $A$ is a symmetrically square-zero determined Banach algebra.   
	\end{thrm}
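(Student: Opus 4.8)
The plan is to establish the two assertions in order, deducing the "moreover'' from the first one.

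\emph{Proof of \eqref{ab=0=>V(a_b)=0-for-symm-a_b}.} I would fix $a,b\in A_{sa}$ with $ab=0$ (the case $a=0$ or $b=0$ being trivial); taking adjoints gives $ba=0$, so $a\perp b$, and since $ab=0$ forces $a\in L(\{b\})=A(1-q)$ with $q=s(b)$, one gets $aq=qa=0$, hence $a\in (1-q)A(1-q)$ and $p:=s(a)\le 1-q$. The idea is to write $a$ as a \emph{finite} sum of square-zero elements whose supports are small enough to leave room for $b$ to be decomposed as well, and then to invoke \cref{prop orth sqzero spans}. The preliminary move is a normalization: $A$ is a factor, hence properly infinite, and at least one of $1-p$, $1-q$ is infinite — for otherwise, since $p\le 1-q$ and $q\le 1-p$, both $p$ and $q$ would be finite and then $1$ would be finite by \cref{fin+fin}. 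Using the symmetry of $V$, I may assume $1-q$ is infinite, hence properly infinite.

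The core of the argument would be a $3\times 3$ block decomposition. Writing $1-q=e_1+e_2+e_3$ with $e_1\sim e_2\sim e_3\sim 1-q$ (so each $e_i$ is properly infinite) and expanding $a=\sum_{i,j}e_iae_j$, the off-diagonal terms $e_iae_j$ ($i\neq j$) are square-zero, while each diagonal term $e_iae_i$, being an element of the properly infinite von Neumann algebra $e_iAe_i$, is a sum of five square-zero elements of $e_iAe_i$ by \cite[Theorem~5]{Pearcy-Topping-SmallSums}. This presents $a$ as a finite sum $\sum_t n_t$ of square-zero elements, each $n_t$ lying in $f_tAf_t$ for a projection $f_t\le 1-q$ that is orthogonal to at least one of $e_1,e_2,e_3$, say $f_te_k=0$. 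Then $n_tb=bn_t=0$, while $1-f_t\ge e_k+q$ is infinite, so $b$ lies in the properly infinite corner $(1-f_t)A(1-f_t)$ and can there be written as a sum of five square-zero elements (again \cite[Theorem~5]{Pearcy-Topping-SmallSums}), which are orthogonal to $n_t$ because they live in the complementary corner. Hence \cref{prop orth sqzero spans} gives $V(n_t,b)=0$, and summing over $t$ yields $V(a,b)=0$. I expect the passage to three blocks to be the delicate point: with only two blocks an off-diagonal piece of $a$ can have co-finite support, leaving no infinite region orthogonal to it — precisely the situation (one support co-finite) in which the naive reduction, enlarging $p$ and $q$ to orthogonal properly infinite projections and quoting \cref{lem separate orth sqzero}, breaks down.

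\emph{Proof of the "moreover''.} Now $X$ is a Banach space and $V$ is bounded. I would put $T(c):=V(c,\tfrac12\cdot 1)$ for $c\in A_{sa}$ and extend $T$ to $A$ by $\CC$-linearity; $T$ is bounded, so the task reduces to proving $2V(a,b)=V(a\circ b,1)$ for $a,b\in A_{sa}$. Consider $W(a,b):=2V(a,b)-V(a\circ b,1)$, which is bounded, symmetric and bilinear. By \eqref{ab=0=>V(a_b)=0-for-symm-a_b} and because orthogonal self-adjoint elements have zero Jordan product, $W$ vanishes on orthogonal self-adjoint pairs; also $W(a,1)=2V(a,1)-V(2a,1)=0$. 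Hence for any projection $p$, $W(p,p)=W(p,1)-W(p,1-p)=0$ since $p\perp 1-p$. It follows that $W(x,x)=\sum_i\lb_i^2W(e_i,e_i)+\sum_{i\neq j}\lb_i\lb_jW(e_i,e_j)=0$ for every finite real-linear combination $x=\sum_i\lb_ie_i$ of pairwise orthogonal projections; as such $x$ are norm-dense in $A_{sa}$ and $W$ is bounded, $W$ vanishes on the diagonal of $A_{sa}$, and polarization gives $W\equiv 0$ on $A_{sa}\times A_{sa}$. Splitting arbitrary $a,b\in A$ into self-adjoint parts and using the $\CC$-bilinearity of $V$ and $T$ then yields $V(a,b)=T(a\circ b)$, so $A$ is a symmetrically square-zero determined Banach algebra.
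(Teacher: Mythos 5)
Your argument is correct, but it departs from the paper's proof in both halves, and in each case in an interesting way. For \cref{ab=0=>V(a_b)=0-for-symm-a_b} the paper runs a four-case analysis on the finiteness of $s(a)$ and $s(b)$: the easy cases are settled by enlarging the supports to orthogonal properly infinite projections and quoting \cref{lem separate orth sqzero}, and the delicate case ($s(b)$ infinite with $1-s(b)$ finite) is handled by first treating projections via a halving $b=e+(b-e)$ and then writing a general $b\in A_{sa}$ as a real combination of eight projections using \cite{Pearcy-Topping-SmallSums}. Your three-block decomposition of the properly infinite projection $1-s(b)$ replaces all of this with a single uniform argument resting only on \cref{prop orth sqzero spans}, the five-square-zero decomposition of \cite{Pearcy-Topping-SmallSums}, and \cref{lem finite-infinite}\cref{factor prop inf}; your diagnosis of why two blocks would not suffice is exactly the obstruction that forces the paper into its Case 3.2. (One wording slip: in your normalization, finiteness of $1$ follows from $p$ and $1-p$ both being finite, not from $p$ and $q$ both being finite; the claim itself is fine.) For the ``moreover'' part the paper composes $V$ with functionals, invokes the orthogonal-forms theorem \cite[Proposition 3.2]{Peralta-Jordan-weakamen} plus Hahn--Banach to get the zero-Jordan-product property, and then cites the zJpd property of $A$ from \cite{Bresar-zpd}; you instead exhibit $T=\tfrac12 V(\cdot,1)$ explicitly and kill $W(a,b)=2V(a,b)-V(a\circ b,1)$ by orthogonality, norm-density of real combinations of orthogonal projections in $A_{sa}$, and polarization. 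Your route is more elementary and self-contained (no external representation theorems), and it has the small bonus of identifying $T$ concretely; the paper's route is shorter on the page and slots directly into the zJpd framework it uses later. Both proofs are complete.
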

	
	\begin{proof}

		We first prove \cref{ab=0=>V(a_b)=0-for-symm-a_b}.
		
		Recall from \cref{lem finite-infinite}\cref{factor prop inf} that every infinite projection in $A$ is properly infinite.
		
		Fix $a,b\in A_{sa}$  with $ab=0$ and let $s(a),s(b)$ be their respective support projections. Observe that since $ab=ba=0$, then $s(a)s(b)=s(b)s(a)=0.$ We shall split the proof into several cases. 
		
		\textit{Case 1.} $s(a)$ and $s(b)$ are infinite. Then $s(a)$ and $s(b)$ are properly infinite and since $a\in s(a)As(a),b\in s(b)As(b)$, we can apply \cref{lem separate orth sqzero}.
		
		\textit{Case 2.} $s(a)$ and $s(b)$ are finite. Then $p'=1-s(a)-s(b)$ is infinite (and hence properly infinite) and $p',s(a),s(b)$ are pairwise orthogonal. There exists $e\leq p'$ such that $e\sim p'-e\sim p'$ (see \cite[Proposition 4.12]{Stratila_Book}). Then $e$ and $p'-e$ are infinite by \cite[Proposition 6.3.2]{Kad_Ring_2}. Set $p=s(a)+e$ and $q=s(b)+(p'-e).$ Then $p$ and $q$ are infinite (otherwise $e$ and $p'-e$ would be finite), $a\in pAp,b\in qAq$ and $pq=0.$ We can again apply \cref{lem separate orth sqzero}
		
		\textit{Case 3.} $s(a)$ is finite and $s(b)$ is infinite.
		
		\textit{Case 3.1.} $1-s(b)$ is infinite. Then $s(a)\leq 1-s(b)$ and we can take $p=1-s(b)$ and $q=s(b)$ and apply \cref{lem separate orth sqzero}.
		
		\textit{Case 3.2.}  $1-s(b)$ is finite.
		
		\textit{Case 3.2.1.} 
		Suppose that $b$ is a projection. Then $1-b$ is finite.  Again, it is clear that $s(a)\leq 1-b.$ We can find by \cite[Proposition 4.12]{Stratila_Book}  a properly infinite projection $e$ such that $e\sim b-e \sim b$ and $b=e+(b-e).$ By \cite[Proposition 6.3.2]{Kad_Ring_2} $b-e$ is also properly infinite. Set $p_1=(1-b)+e.$ Then $p_1$ is infinite by \cref{lem finite-infinite}\cref{fin+inf}, $a\in p_1 A p_1$ and $p_1\perp b-e,$ thus $V(a,b-e)=0$ by Case 1. Similarly, setting $p_2=(1-b)+(b-e)$ we have $V(a,e)=0.$ Thus $V(a,b)=V(a,e)+V(a,b-e)=0.$ 
		
		\textit{Case 3.2.2.} $b\in A_{sa}$.
		Now, since $b\in s(b)As(b)$ and the latter is a properly infinite von Neumann algebra, then by \cite{Pearcy-Topping-SmallSums} there exist $q_1,\dots,q_8 \in P(A)$ and $\lambda_1,\dots,\lambda_8 \in \mathbb{R}$ such that $b=\sum_{\lambda=1}^8\lambda_i q_i.$ For each $i\in \{1,\dots,8\},$ if $q_i$ is finite, then $V(a,q_i)=0$ by Case 2, otherwise $q_i$ is infinite, in which case we can apply Case 3.2.1 to show that $V(a,q_i)=0.$ Finally, we have $V(a,b)=\sum_i \lambda_i V(a,q_i)=0.$

		\textit{Case 4.} $s(a)$ is infinite and $s(b)$ is finite. This case follows from Case 3 since $V$ is symmetric. 
		
		The proof of \cref{ab=0=>V(a_b)=0-for-symm-a_b}  is complete.\smallskip
		
		
		
		
		
		Now let us assume that $X$ is a Banach space and $V$ is bilinear and continuous. Fix $\psi\in X^*$ and set $V_{\psi}=\psi \circ V.$ By the first part of the proof $V_{\psi}$ is orthogonal on $A_{sa}$ in the sense of \cite[Proposition 3.2]{Peralta-Jordan-weakamen} and hence there exists a unique $\vf\in A^*$ such that $V_{\psi}(a,b)=\vf(a\circ b), \; a,b\in A.$ In particular, 
		$$ a\circ b=0 \Rightarrow \psi( V(a,b))=\vf(a\circ b)=0.$$ 
		Applying the Hahn-Banach theorem, one sees that $V$ satisfies \cref{a-circ-b=z=>V(a_b)=const} with $z=0$ for all $a,b\in A$. Then, since $A$ is zJpd by \cite[Proposition 4.9 and Corollary 6.25]{Bresar-zpd}, the result follows.
	\end{proof}
	

	\begin{cor}\label{cor direct sums sqz}
		Let $A_1,\dots,A_n$ be infinite simple von Neumann algebras. Then $A_1\oplus \dots \oplus A_n$ is a symmetrically square-zero determined Banach algebra.
	\end{cor}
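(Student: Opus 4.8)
The plan is to reduce the statement for the direct sum $A_1\oplus\dots\oplus A_n$ to the case of a single factor, which is \cref{thm BH square-zero determined}. Throughout, write $A=A_1\oplus\dots\oplus A_n$, let $1_k$ be the unit of $A_k$ (a central projection of $A$), and let $\pi_k\colon A\to A_k$ be the canonical projection; note $a\circ b$ in $A$ is the tuple of the $a_k\circ b_k$ in $A_k$.

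First I would take a bounded symmetric bilinear map $V\colon A\times A\to X$ with the square-zero property and show it ``localizes'' on the blocks. The point is that if $a\in A_iAa_i$ (i.e.\ $a$ supported on block $i$) and $b\in A_jAb_j$ with $i\ne j$, then $a$ and $b$ are orthogonal and in fact $(a+b)^2 = a^2+b^2$, so $(\lb a+b)^2=0$ whenever $a^2=b^2=0$; more usefully, for $a=1_ia1_i$ and $b=1_jb1_j$ with $i\neq j$ one has $ab=ba=0$, and since $1_i$ and $1_j$ are properly infinite projections of $A$ (each $A_k$ being infinite) with $1_i1_j=0$, \cref{lem separate orth sqzero} gives $V(a,b)=0$. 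Thus $V(a,b)=\sum_{k=1}^n V(1_ka1_k,1_kb1_k)$ for all $a,b$, and each $V_k(x,y):=V(1_kx1_k,1_ky1_k)$, restricted to $A_k\times A_k$ (identifying $A_k$ with $1_kA1_k$), is a bounded symmetric bilinear map on the simple infinite von Neumann algebra $A_k$ which inherits the square-zero property (a square-zero element of $A_k$ is a square-zero element of $A$ supported on block $k$).

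Next I would apply \cref{thm BH square-zero determined} to each $A_k$: there is a bounded linear $T_k\colon A_k\to X$ with $V_k(x,y)=T_k(x\circ y)$ for all $x,y\in A_k$. Define $T\colon A\to X$ by $T(a)=\sum_{k=1}^n T_k(\pi_k(a))$; this is bounded and linear. Then for $a,b\in A$,
\[
T(a\circ b)=\sum_{k=1}^n T_k\big(\pi_k(a)\circ\pi_k(b)\big)=\sum_{k=1}^n V_k\big(\pi_k(a),\pi_k(b)\big)=\sum_{k=1}^n V(1_ka1_k,1_kb1_k)=V(a,b),
\]
using the localization identity in the last step. Hence $A$ is a symmetrically square-zero determined Banach algebra.

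The main obstacle — really the only non-routine point — is justifying the localization step, i.e.\ that $V$ vanishes on elements supported on distinct blocks. The clean way is exactly the appeal to \cref{lem separate orth sqzero} using that each block's unit $1_k$ is a properly infinite projection of $A$ with $1_i1_j=0$ for $i\neq j$; one should double-check that ``properly infinite'' survives passing from $A_k$ to $A$, which it does because $Z(A)=\bigoplus_k Z(A_k)=\bigoplus_k\CC 1_k$ and a central projection $z\le 1_k$ is just a central projection of $A_k$, so $z1_k=z$ finite forces $z=0$. Everything else is bookkeeping with the block decomposition and the linearity of $T$.
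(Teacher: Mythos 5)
Your argument is correct, and it shares the paper's two main ingredients (block-wise reduction plus cross-block vanishing via square-zero decompositions), but it finishes differently. The paper's proof does not assemble $T$ by hand: it shows only that $V$ has the Jordan product property at zero --- Case 1 ($a,b$ in the same block, $a\circ b=0$) via \cref{thm BH square-zero determined}, Case 2 ($a\in A_i$, $b\in A_j$, $i\ne j$) by decomposing $a$ and $b$ into five square-zero summands using Pearcy--Topping and invoking \cref{prop orth sqzero spans} --- and then cites \cite[Proposition 4.9 and Corollary 6.25]{Bresar-zpd} to conclude that the direct sum is zJpd, which produces $T$ in one stroke. You instead prove the stronger localization identity $V(a,b)=\sum_k V(1_ka1_k,1_kb1_k)$ (your appeal to \cref{lem separate orth sqzero} with the properly infinite central projections $1_i,1_j$ is a clean repackaging of the paper's Case 2, since that lemma is itself proved by the same Pearcy--Topping argument), apply the ``Moreover'' clause of \cref{thm BH square-zero determined} on each block to get $T_k$, and glue. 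What your route buys is self-containedness: you avoid the external zJpd result for the direct sum and obtain $T$ explicitly as $\sum_k T_k\circ\pi_k$. What the paper's route buys is brevity and a template that it reuses verbatim for the $c_0$-sum case, where your finite gluing of the $T_k$ would need an extra convergence argument. Your verification that each $1_k$ is properly infinite in $A$ is correct and is the right point to check; the only blemishes are typographical ($A_iAa_i$ for $1_iA1_i$, and a stray $\lambda$ in $(a+b)^2=0$).
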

	\begin{proof}
		Let us write $A=A_1\oplus \dots \oplus A_n.$ Let $X$ be a Banach space and $V:A\times A\to X$ a bounded symmetric bilinear map that has the square-zero property. We are going to show that $V$ has Jordan product property at zero. Then the result will follow from \cite[Proposition 4.9 and Corollary 6.25]{Bresar-zpd}. We shall split the proof into two cases.  Let $a,b\in A$ with $a\circ b=0$.
		
		\textit{Case 1.} $a,b\in A_i$. Then $V(a,b)=V_i(a,b)=0$, where $V_i:=V|_{A_i \times A_i}$, which has the Jordan product property at zero by \cref{thm BH square-zero determined}.
		
		\textit{Case 2.} $a\in A_i,b\in A_j$ with $i\neq j$. If $a^2=b^2=0$, then $(a+b)^2=0$ and hence $V(a+b,a+b)=0.$ It follows that $V(a,b)=0.$ Otherwise by \cite[Theorem 5]{Pearcy-Topping-SmallSums} we can find $a_k\in A_i,b_l\in A_j,k,l=1,\dots, 5$, such that $a_k^2=b_l^2=0$ for all $k,l$ and $a=a_1+\dots+a_5$ and $b=b_1+\dots+b_5$. Then $V(a_k,b_l)=0,\forall k,j.$ Thus $V(a,b)=0$ by bilinearity.
		
		The general case $a,b\in A$ follows from Cases 1 and 2 by the bilinearity of $V$.
	\end{proof}

	Given a family $(A_{\lambda})$ of Banach algebras, let $\ell_{\infty}(\Lambda,A_{\lb})$ be the set of all $(a_{\lb})\in \prod_{\lb} A_{\lb}$ such that $\|(a_\lb)\|:=\sup \{ \|a_\lb\|\}<\infty.$ Then $\ell_{\infty}(\Lambda,A_{\lb})$ is a Banach algebra. Moreover, if each $A_{\lambda}$ is a C$^*$-algebra, then $\ell_{\infty}(\Lambda,A_{\lb})$ is also a C$^*$-algebra. The algebra $\ell_{\infty}(\Lambda,A_{\lb})$ contains the C$^*$-subalgebra $C_0(\Lambda,A_{\lb})$ formed by $(a_{\lb})\in \prod_{\lb} A_{\lb}$ with $\lim \|a_\lb\|=0$. Furthermore, if $\bigoplus A_{\lambda}=\{a\in \prod_{\lb} A_{\lb}:|\{\lambda\in \Lambda: a_{\lb}\neq 0\}|<\infty\}$ then $C_0(\Lambda,A_{\lb})$ is the norm-closure of $\bigoplus A_{\lambda}$ inside $\ell_{\infty}(\Lambda,A_{\lb}).$

	\begin{cor} \label{cor sums B(H) determined normal}
		Let $(A_{\lambda})_{\lambda \in \Lambda}$ be a family of simple infinite von Neumann algebras. Then $C_0(\Lambda,A_{\lb})$ is a symmetrically square-zero determined Banach algebra.
	\end{cor}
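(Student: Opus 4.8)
The plan is to bootstrap from the finite–direct–sum case in \cref{cor direct sums sqz} to the $c_0$-sum by a truncation–and–continuity argument. Write $A=C_0(\Lambda,A_\lambda)$, fix a Banach space $X$ and a bounded symmetric bilinear map $V:A\times A\to X$ with the square-zero property. For a finite subset $F\subseteq\Lambda$ put $A_F=\bigoplus_{\lambda\in F}A_\lambda$; this is a finite direct sum of simple infinite von Neumann algebras, unital with identity the projection $1_F\in\bigoplus_\lambda A_\lambda\subseteq A$. The restriction $V_F:=V|_{A_F\times A_F}$ is bounded, symmetric, and still has the square-zero property (a square-zero element of $A_F$ is square-zero in $A$), so \cref{cor direct sums sqz} provides a bounded linear map $T_F:A_F\to X$ with $V(a,b)=T_F(a\circ b)$ for all $a,b\in A_F$. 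Evaluating at $a=1_F$ gives $V(1_F,c)=T_F(1_F\circ c)=2T_F(c)$, whence $T_F(c)=\tfrac{1}{2} V(1_F,c)$ and $\|T_F\|\le\tfrac{1}{2}\|V\|$, a bound independent of $F$.

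The heart of the argument is to show these local maps are compatible. For finite sets $F\subseteq F'$ and $c\in A_F$ one has $1_{F'}-1_F=1_{F'\setminus F}$, so $T_{F'}(c)-T_F(c)=\tfrac{1}{2} V(1_{F'\setminus F},c)$; but $1_{F'\setminus F}$ and $c$ lie in complementary blocks of $A_{F'}$, hence $1_{F'\setminus F}\circ c=0$, and applying the representation $V=T_{F'}(\,\cdot\circ\cdot\,)$ on $A_{F'}\times A_{F'}$ yields $V(1_{F'\setminus F},c)=T_{F'}(0)=0$. Thus $T_{F'}|_{A_F}=T_F$. Consequently the $T_F$ patch together into a well-defined linear map $T_0:\bigoplus_\lambda A_\lambda\to X$ satisfying $\|T_0(c)\|\le\tfrac{1}{2}\|V\|\,\|c\|$ (using $\|1_F\|=1$), which extends uniquely to a bounded linear map $T:A\to X$, since $\bigoplus_\lambda A_\lambda$ is dense in $A$.

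It remains to check $V(a,b)=T(a\circ b)$ for all $a,b\in A$. For $a,b\in\bigoplus_\lambda A_\lambda$ this is immediate on choosing a finite $F$ containing the supports of $a$ and $b$ and using $V(a,b)=T_F(a\circ b)=T(a\circ b)$. For general $a,b\in A$ one approximates by the finite truncations $a_F,b_F\in A_F$, which converge to $a,b$ in norm because $a,b\in C_0(\Lambda,A_\lambda)$; then $a_F\circ b_F\to a\circ b$, and joint continuity of $V$ together with continuity of $T$ give $V(a,b)=\lim_F V(a_F,b_F)=\lim_F T(a_F\circ b_F)=T(a\circ b)$. Hence $A$ is a symmetrically square-zero determined Banach algebra.

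The main obstacle, as indicated, is the compatibility identity $T_{F'}|_{A_F}=T_F$: it is exactly this coherence — which rests on $V$ annihilating $1_{F'\setminus F}\circ c$, a fact obtained for free from \cref{cor direct sums sqz} applied to $A_{F'}$ — that lets the locally defined $T_F$ be assembled into one global map; the rest is routine manipulation of bounded bilinear maps and norm limits. One could instead try to imitate the end of the proof of \cref{thm BH square-zero determined}, showing for each $\psi\in X^*$ that $\psi\circ V$ vanishes on orthogonal self-adjoint pairs and invoking a representation theorem for orthogonal bilinear forms; but that would require such a theorem valid for the (non-von-Neumann) C$^*$-algebra $A$, whereas the truncation route stays within the results already at hand.
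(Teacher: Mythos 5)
Your proof is correct, and while it shares its main technical ingredients with the paper's argument (truncation to finite sub-sums $A_F$, the identity $T_F(c)=\tfrac12 V(1_F,c)$ giving the uniform bound $\|T_F\|\le\tfrac12\|V\|$, and a density-plus-continuity passage to all of $C_0(\Lambda,A_\lb)$), it diverges at the concluding step in a genuine way. The paper uses the truncation estimate only to prove that $V$ has the Jordan product property at zero, i.e.\ $a\circ b=0\impl V(a,b)=0$, and then outsources the existence of the representing map $T$ to the fact that such $c_0$-sums are symmetrically zJpd Banach algebras (citing \cite[Proposition 4.9 and Corollary 6.25]{Bresar-zpd}, as in \cref{cor direct sums sqz}). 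You instead build $T$ explicitly: the compatibility identity $T_{F'}|_{A_F}=T_F$ --- which you correctly derive from $V(1_{F'\sm F},c)=T_{F'}(1_{F'\sm F}\circ c)=0$ for $c\in A_F$ --- lets the local maps patch into a contractive $T_0$ on $\bigoplus_\lb A_\lb$, which then extends by density, and the identity $V(a,b)=T(a\circ b)$ passes to the closure by joint continuity. What your route buys is self-containedness relative to \cref{cor direct sums sqz}, avoiding the external zJpd machinery for non-unital C$^*$-algebras; what the paper's route buys is brevity and consistency with how the other results in \cref{sec-sq-zero} are phrased (everything is reduced to the Jordan product property at zero). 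Both arguments are sound.
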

	\begin{proof} 
		Let $X$ be a vector space and $V:C_0(\Lambda,A_{\lb})\times C_0(\Lambda,A_{\lb}) \to X$ be a symmetric bounded bilinear map. As in previous results, it is enough to show that $V$ has the Jordan product property at zero.
		
		Take $a,b\in \bigoplus A_{\lambda}$ with $a\circ b=0.$ Set $J=\{ \lambda\in \Lambda: a_\lb\neq 0 \mbox{ or } b_\lb\neq 0\}.$ Clearly $|J|<\infty.$ Define $1_J=\sum_{\lambda\in J} 1_{A_{\lambda}}.$ It is clear that $\|1_J\|=1.$ Then $1_J,a,b\in \bigoplus_{\lambda \in J} A_{\lambda}$ and by  \cref{cor direct sums sqz} we have 
		\begin{align*}
			V(a,b)=T(a\circ b)=\frac 12 T(1_J \circ (a\circ b))=\frac 12 V(1_J,a\circ b) ,\\
			\mbox{ and }  \|V(a,b)\| \leq \frac 12\|V\| \|1_J\| \|a\circ b\|=\frac 12\|V\| \|a\circ b\|
		\end{align*}  by continuity of $V.$   Now take $a,b\in C_0(\Lambda,A_{\lb})$ with $a\circ b=0.$ There exist $(a_n),(b_n)\in \bigoplus A_{\lambda}$ such that $\lim_n a_n=a$ and $\lim_n b_n=b.$ We have $$\| V(a,b)\|=\lim_n\|V(a_n,b_n)\|\leq \frac 12\| V\| \lim_n \| a_n\circ b_n\|=\frac 12\|V\| \|a\circ b\|=0, $$ hence $V$ has Jordan product property at zero. 
	\end{proof}

	
	
	

	\section{Transferring square-zero property to Jordan product property}\label{sec-Jordan-property}
	
	In this section, we deal with bilinear maps that have the Jordan product property at a fixed point. 
	Our first two lemmas are inspired by \cite[Lemma 1.3]{Chen_jordan_der} and \cite[Theorem 2]{Catalano-Hsu-Kapalko19}, respectively. Similar ideas can also be found in the more recent work \cite{Gen_jord_SU}. For $\alpha\in \CC,r>0$ we denote by $D(\alpha,r)$ (resp., $\overline{D}(\alpha,r)$) the open (resp., closed) disc of center $\alpha$ and radius $r.$

	\begin{lem}\label{lem hol funct calculus}
		Let $A$ be a unital Banach algebra and $a\in A.$ For every $\lambda >\|a\|$ there exists $b_{\lambda}\in A$ such that $b_{\lambda}^2=1-\frac{1}{\lambda}a$. 
	\end{lem}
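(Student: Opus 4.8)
The plan is to use the holomorphic functional calculus to define a square root of the element $1-\frac{1}{\lambda}a$. First I would observe that since $\lambda > \|a\|$, we have $\|\frac{1}{\lambda}a\| < 1$, so the spectrum of $\frac{1}{\lambda}a$ is contained in the open unit disc $D(0,1)$; equivalently, the spectrum of $1-\frac{1}{\lambda}a$ is contained in $D(1,1)$, a disc that avoids the origin and the negative real axis. Actually, to be safe and since $A$ need not be unital, I would work in the unitization $A^+$ (or simply assume $A$ is unital, as is the case in every application in this paper); the resulting $b_\lambda$ will then automatically lie in $A$ because the square root function, being holomorphic on $D(1,1)$ and vanishing nowhere there, can be written as $1 + g$ with $g$ holomorphic on $D(1,1)$ and $g(1)=0$, so that $g(1-\frac1\lambda a)$ has no scalar part.

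The key steps, in order, are: (i) note $\sigma(1-\frac1\lambda a)\subseteq \overline{D}(1,\|a\|/\lambda)\subseteq D(1,1)$; (ii) let $f$ be the principal branch of $\sqrt{z}$, which is holomorphic on the open right half-plane, hence on a neighbourhood of $\overline{D}(1,\|a\|/\lambda)$, and satisfies $f(z)^2=z$ there; (iii) apply the holomorphic functional calculus to set $b_\lambda = f(1-\frac1\lambda a)$, using that the functional calculus is a unital algebra homomorphism on holomorphic functions defined near the spectrum, so $b_\lambda^2 = f^2(1-\frac1\lambda a) = (1-\frac1\lambda a)$; (iv) verify $b_\lambda\in A$ by writing $f(z)=1+g(z)$ with $g$ holomorphic near $\overline{D}(1,\|a\|/\lambda)$ and $g(1)=0$, so $b_\lambda = 1 + g(1-\frac1\lambda a)$ where the Cauchy integral defining $g(1-\frac1\lambda a)$ lies in the closed subalgebra generated by $a$, hence in $A$.

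I do not expect a serious obstacle here; this is a standard application of functional calculus. The only point requiring a little care is the non-unital case, i.e.\ making sure $b_\lambda$ genuinely belongs to $A$ and not merely to $A^+$, which is handled by the observation that the principal square root fixes $1$ and so only contributes an element of $A$ beyond the identity; since the statement's conclusion $b_\lambda^2 = 1-\frac1\lambda a$ itself involves the identity, one is implicitly in $A^+$ anyway, and the cleanest reading is simply that $b_\lambda \in A^+$ with $b_\lambda - 1 \in A$. Alternatively, one can avoid functional calculus entirely and use the binomial series $b_\lambda = \sum_{n\geq 0}\binom{1/2}{n}(-\frac1\lambda a)^n$, which converges absolutely since $\|\frac1\lambda a\|<1$ and $\sum_n |\binom{1/2}{n}| < \infty$, and then check $b_\lambda^2 = 1-\frac1\lambda a$ by the Cauchy product together with the identity $\sum_{k=0}^n \binom{1/2}{k}\binom{1/2}{n-k} = \binom{1}{n}$; I would mention this as the elementary alternative but carry out the functional-calculus version as the main line.
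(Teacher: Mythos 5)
Your proof is correct and follows essentially the same route as the paper: the paper also notes $sp(\frac{1}{\lambda}a)\subseteq \overline{D}(0,\|a\|/\lambda)\subset D(0,1)$ and applies the holomorphic functional calculus to the function $g(z)=f(1-z)$, where $f$ is the principal branch of $z^{1/2}$, which is the same as applying $f$ directly to $1-\frac{1}{\lambda}a$ as you do. Your additional remarks on the non-unital case and the binomial-series alternative are fine but not needed, since the lemma is only invoked in the paper for unital (Rickart C$^*$-) algebras.
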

	\begin{proof}
		It is well known that $sp(a)\subseteq \overline{D}(0,\|a\|)$ and hence $sp( \frac{1}{\lambda}a) \subseteq \overline{D}(0,\frac{\|a\|}{\lambda })\subset D(0,1).$
		Let $f(z)$ be the principal branch of $z^{\frac{1}{2}}.$ The function $g(z)=f(1-z)$ is holomorphic in the disc $D(0,1)$ and $g(z)^2=1-z.$ Let us define, via holomorphic functional calculus (see, for instance, \cite[Theorem 3.3.5]{Kad_Ring_1}) $ b_{\lambda}=g( \frac{1}{\lambda}a).$ Then $b_{\lambda}^2=1-\frac{1}{\lambda}a.$ 
	\end{proof}
	
	A C$^*$-algebra $A$ is said to be a \textit{Rickart C$^*$-algebra} if for every $x\in A$ there exists $p\in P(A)$ such that $R(\{x\})=pA$. If $A$ is a Rickart C$^*$-algebra, then for every $x$ there also exists $q\in P(A)$ such that $L(\{x\})=qA$. By \cite[\S 3, Proposition 3]{Berberian_Book} for every $x\in A$ there exists a projection denoted by $RP(x)$ (respectively, $LP(x)$) such that $xRP(x)=x$, and we have $xy=0$ iff $RP(x)y=0$ (respectively, $LP(x)x=x$, and we have $yx=0$ iff $yLP(x)=0$). It is worth noticing that every Rickart C$^*$-algebra is unital \cite[\S 3, Proposition 2]{Berberian_Book}.

	\begin{lem}\label{lem transfer pp normal to sq-zero}
		Let $A$ be a unital C$^*$-algebra, $X$ be a vector space and let $V:A\times A\to  X$ be a symmetric bilinear map that has Jordan product property at a fixed element $z\in A.$  
		\begin{enumerate}
			\item \label{a^2-2b^2=z} If $2a^2-2b^2=z$ then $2V(a,a)-2V(b,b)=V(1,z).$
			\item \label{a^2=b^2=1} If $a^2=b^2=1$ then $V(a,a)=V(b,b).$
			\item\label{transfer to square-zerp} If $A$ is a Rickart C$^*$-algebra then $V$ has square-zero property.
		\end{enumerate}  
	\end{lem}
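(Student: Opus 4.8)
The plan is to prove the three items in order, using each one to set up the next; in every case the engine is the defining property of $V$, namely that $V(u,v)=V(u',v')$ whenever $u\circ v=u'\circ v'=z$, combined with the polarization identity $(a+b)\circ(a-b)=2a^2-2b^2$ and the trivial identity $1\circ\frac{z}{2}=z$ (items \ref{a^2-2b^2=z} and \ref{a^2=b^2=1} presuppose that $A$ is unital). For \ref{a^2-2b^2=z}: the hypothesis $2a^2-2b^2=z$ says precisely that $(a+b)\circ(a-b)=z$, so the Jordan product property forces $V(a+b,a-b)=V(1,\frac z2)=\frac12V(1,z)$; expanding the left side by bilinearity and cancelling $V(a,b)=V(b,a)$ by symmetry gives $V(a,a)-V(b,b)=\frac12V(1,z)$, which is the assertion.

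For \ref{a^2=b^2=1} the idea is not to compare $a$ and $b$ directly but to route both through one common auxiliary element. Given $a^2=b^2=1$, I would choose $\lambda>0$ with $2\lambda^2>\|z\|$ and apply \cref{lem hol funct calculus} to $z$ with parameter $2\lambda^2$, obtaining $d\in A$ with $d^2=1-\frac{1}{2\lambda^2}z$; setting $c=\lambda d$ gives $c^2=\lambda^2-\frac z2$. Then $2(\lambda a)^2-2c^2=z=2(\lambda b)^2-2c^2$, so applying \ref{a^2-2b^2=z} twice and using $V(\lambda a,\lambda a)=\lambda^2V(a,a)$ yields $2\lambda^2V(a,a)-2V(c,c)=V(1,z)=2\lambda^2V(b,b)-2V(c,c)$, hence $V(a,a)=V(b,b)$. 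The scaling factor $\lambda$ is essential here: $\lambda^2-\frac z2$ need not have a square root in $A$ when $\|z\|$ is large, whereas $1-\frac{z}{2\lambda^2}$ does as soon as $2\lambda^2>\|z\|$, which is exactly the hypothesis of \cref{lem hol funct calculus}.

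For \ref{transfer to square-zerp}, let $x\in A$ with $x^2=0$; the goal is $V(x,x)=0$. This is where the Rickart hypothesis enters (and, incidentally, supplies the unit): put $e=RP(x)$, so that $xe=x$ and, because $x\cdot x=0$, also $ex=0$. Then $a:=1-2e$ satisfies $a^2=1$ (as $e^2=e$), together with $ax=x$ and $xa=-x$, whence $ax+xa=0$ and therefore $(a+tx)^2=a^2+t(ax+xa)+t^2x^2=1$ for every scalar $t$. Applying \ref{a^2=b^2=1} to the pair $a$, $a+tx$ gives $V(a+tx,a+tx)=V(a,a)$, and expanding by bilinearity and symmetry turns this into $2tV(a,x)+t^2V(x,x)=0$ for all $t\in\CC$; comparing $t=1$ with $t=-1$ forces $V(x,x)=0$, which is the square-zero property.

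The step I expect to demand the most care is \ref{a^2=b^2=1}: the naive route — produce a single $c$ with $2a^2-2c^2=z$ and invoke \ref{a^2-2b^2=z} — breaks down because $1-\frac z2$ may admit no square root in $A$, and the remedy of rescaling so that \cref{lem hol funct calculus} becomes applicable is the crucial device. Once \ref{a^2=b^2=1} is available, \ref{transfer to square-zerp} is short, the only real observation being that $1-2RP(x)$ is a square root of the identity that anticommutes with $x$; and \ref{a^2-2b^2=z} is simply the bilinear polarization identity.
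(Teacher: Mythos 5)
Your proposal is correct and follows essentially the same route as the paper: item \ref{a^2-2b^2=z} via the polarization identity $(a+b)\circ(a-b)=2a^2-2b^2$ compared against $1\circ\frac z2=z$; item \ref{a^2=b^2=1} by rescaling and routing both $a$ and $b$ through a common auxiliary square root supplied by \cref{lem hol funct calculus}; and item \ref{transfer to square-zerp} by producing a symmetry ($1-2RP(x)$ in your version, $2LP(x)-1$ in the paper's) that anticommutes with the square-zero element and expanding $V$ on $(a+tx)^2=1$. The minor differences (using $RP$ instead of $LP$, and placing the auxiliary element on the other side of the identity $2a^2-2b^2=z$) are immaterial.
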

	\begin{proof}
		\cref{a^2-2b^2=z} The property follows from $(a+b)\circ (a-b)=1\circ \frac z2=z,$ Jordan product property and bilinearity.
		
		\cref{a^2=b^2=1} Take $n$ such that $\|z\|<2n^2.$ By \cref{lem hol funct calculus} there exists $x\in A$ such that $x^2=1+\frac{z}{2n^2}.$ Define $y=n x.$ Since $y^2=n^2x^2=n^2 1+ \frac{z}{2}$ we have $$ 2y^2-2(na)^2=2y^2-2(nb)^2=z.$$ By \cref{a^2-2b^2=z} we have $$2V(y,y)-2V(na,na)=V(1,z)=2V(y,y)-2V(nb,nb)$$ and hence $V(a,a)=V(b,b).$
		
		\cref{transfer to square-zerp} Let $a\in A$ be a square-zero element and $p=LP(a).$ Then $pa=a$ and since $aa=0$ we have $ap=0.$ Set $u=2p-1.$ It is easy to see that $u^2=1$ and $a\circ u=0.$ Thus $1=u^2=(u+a)^2=(u-a)^2.$ By \cref{a^2=b^2=1} we have
		\begin{align*}
			V(u,u)=V(u,u)+2V(a,u)+V(a,a)=V(u,u)-2V(a,u)+V(a,a).
		\end{align*} 
		whence $V(a,a)=0.$
	\end{proof}
	
	\begin{rem}\label{rem i-ii implies iii}
		It is not hard to see that if $V:A\times A\to X$ is a symmetric bilinear map that satisfies \cref{lem transfer pp normal to sq-zero} \cref{a^2=b^2=1} and $A$ is a Rickart C$^*$-algebra, then $V$ also satisfies \cref{lem transfer pp normal to sq-zero}\cref{transfer to square-zerp}.
	\end{rem}
	
	As a consequence of \cref{lem transfer pp normal to sq-zero}\cref{transfer to square-zerp}, a symmetrically square-zero determined Rickart C$^*$-algebra is symmetrically determined by Jordan products at any point. Since every von Neumann algebra is a Rickart C$^*$-algebra (see \cite[\S 1.10]{Sakai_Book}), we have as a consequence of \cref{cor direct sums sqz}:
	
	\begin{thrm}\label{cor sums A simple determined any}
		Finite direct sums of simple infinite von Neumann algebras are symmetrically determined, as Banach algebras, by Jordan products at any fixed element.
	\end{thrm}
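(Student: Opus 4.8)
The plan is to reduce the claimed statement, \cref{cor sums A simple determined any}, to results already established in the excerpt, namely \cref{cor direct sums sqz} and \cref{lem transfer pp normal to sq-zero}\cref{transfer to square-zerp}. The only genuinely new content is an abstract implication: if $A$ is a (symmetrically) square-zero determined Banach algebra \emph{and} a Rickart C$^*$-algebra, then $A$ is (symmetrically) determined by Jordan products at every fixed element $z\in A$. Once this implication is isolated, the theorem follows immediately, since a finite direct sum $A=A_1\oplus\dots\oplus A_n$ of simple infinite von Neumann algebras is symmetrically square-zero determined by \cref{cor direct sums sqz}, and it is a Rickart C$^*$-algebra because each von Neumann algebra is Rickart (\cite[\S 1.10]{Sakai_Book}) and a finite direct sum of Rickart C$^*$-algebras is again Rickart.

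So first I would fix a Banach space $X$, an element $z\in A$, and a bounded symmetric bilinear map $V:A\times A\to X$ satisfying the Jordan product property at $z$, i.e. \cref{a-circ-b=z=>V(a_b)=const}. The key step is to invoke \cref{lem transfer pp normal to sq-zero}\cref{transfer to square-zerp}: since $A$ is a Rickart C$^*$-algebra, $V$ automatically has the square-zero property \cref{def bilinear sq-zero}. Then, because $A$ is symmetrically square-zero determined as a Banach algebra, there exists a bounded linear map $T:A\to X$ with $V(a,b)=T(a\circ b)$ for all $a,b\in A$. This is exactly the conclusion required in the definition of ``symmetrically determined by Jordan products at $z$'', so we are done.

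There is one small point worth checking carefully, which I expect to be the main (though modest) obstacle: the definition of ``determined by Jordan products at $z$'' in the Banach setting asks only that maps $V$ satisfying \cref{a-circ-b=z=>V(a_b)=const} be of the form $T\circ\circ$, and one must confirm that this hypothesis is genuinely weaker than — or at least implies — the Jordan product property at $z$ as used in \cref{lem transfer pp normal to sq-zero}. Indeed, if $V$ satisfies \cref{a-circ-b=z=>V(a_b)=const}, then the value $V(a,b)$ depends only on $a\circ b$ whenever $a\circ b=z$; in particular the constant $V(1,z)$ used throughout the proof of \cref{lem transfer pp normal to sq-zero} (where $1\circ z = 2z$... ) needs to be handled with care, but the cited lemma already packages this, so no new work is required here. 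The only thing to verify afresh is that the finite direct sum is Rickart: given $x=(x_1,\dots,x_n)$, the right annihilator is $R(\{x\})=\bigoplus_i R(\{x_i\})=\bigoplus_i p_iA_i=pA$ with $p=(p_1,\dots,p_n)$ a projection, where $p_i$ is the projection witnessing the Rickart property in $A_i$. With these two observations in place the proof is complete.

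\begin{proof}
Write $A=A_1\oplus\dots\oplus A_n$ with each $A_i$ a simple infinite von Neumann algebra. Since each $A_i$ is a von Neumann algebra, it is a Rickart C$^*$-algebra by \cite[\S 1.10]{Sakai_Book}; given $x=(x_i)_i\in A$, if $p_i\in P(A_i)$ satisfies $R(\{x_i\})=p_iA_i$, then $p=(p_i)_i\in P(A)$ satisfies $R(\{x\})=pA$, so $A$ is a Rickart C$^*$-algebra. By \cref{cor direct sums sqz}, $A$ is a symmetrically square-zero determined Banach algebra.

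Now fix $z\in A$, a Banach space $X$, and a bounded symmetric bilinear map $V:A\times A\to X$ satisfying \cref{a-circ-b=z=>V(a_b)=const}; in particular $V$ has the Jordan product property at $z$. Since $A$ is a Rickart C$^*$-algebra, \cref{lem transfer pp normal to sq-zero}\cref{transfer to square-zerp} shows that $V$ has the square-zero property. As $A$ is symmetrically square-zero determined, there is a bounded linear map $T:A\to X$ with $V(a,b)=T(a\circ b)$ for all $a,b\in A$. Hence $A$ is symmetrically determined, as a Banach algebra, by Jordan products at $z$, and $z\in A$ was arbitrary.
\end{proof}
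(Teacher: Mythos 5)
Your proposal is correct and follows essentially the same route as the paper: the paper likewise observes that $A$ is a Rickart C$^*$-algebra (being a von Neumann algebra), applies \cref{lem transfer pp normal to sq-zero}\cref{transfer to square-zerp} to pass from the Jordan product property at $z$ to the square-zero property, and then invokes \cref{cor direct sums sqz}. The only cosmetic difference is that you verify the Rickart property componentwise, whereas the paper simply notes that the finite direct sum is itself a von Neumann algebra and hence Rickart.
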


	\section{Applications to Jordan homomorphisms at a point and derivable maps}\label{sec-appl}

	The following result can be seen as an answer to the problem posed by {\v{S}}emrl in \cite{Semrl-square-zero} and generalizes \cite[Theorem 2.3]{Bai_Hou_Annih_Polyn} for bounded linear maps between more general von Neumann algebras. 
	
	\begin{thrm}\label{thrm vf pp at square zero}
		Let $A_1,\dots,A_n$ be infinite simple von Neumann algebras, $B$ a Banach algebra and $A=A_1\oplus \dots \oplus A_n.$ Let $\vf:A\to B$ be a bounded linear map. If $\vf$ preserves square-zero elements or is a Jordan homomorphism at $z\in A,$ then $\vf$ preserves zero Jordan products.
		
		Moreover, if $B$ has a bounded approximate identity and $\vf$ is surjective, then $\vf(1)$ is central and invertible, and there exists a surjective Jordan homomorphism $\psi:A\to B$ such that $\vf(-)=\vf(1)\psi(-).$  
	\end{thrm}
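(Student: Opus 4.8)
The plan is to reduce the statement about the linear map $\vf$ to the bilinear machinery already developed. Define $V:A\times A\to B$ by $V(a,b)=\vf(a)\circ\vf(b)$. This is clearly a bounded symmetric bilinear map. I claim that in either hypothesis $V$ has the square-zero property, i.e. $a^2=0\Rightarrow\vf(a)^2=0$. If $\vf$ preserves square-zero elements this is immediate by definition. If $\vf$ is a Jordan homomorphism at $z$, then by \cref{lem transfer pp normal to sq-zero}\cref{transfer to square-zerp} the map $V$ has the square-zero property, since $A$, being a finite direct sum of von Neumann algebras, is a Rickart C$^*$-algebra (each summand is, and a finite direct sum of Rickart C$^*$-algebras is Rickart). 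Hence in both cases $V$ satisfies \cref{def bilinear sq-zero}. By \cref{cor direct sums sqz} (more precisely, the proof shows $V$ then has the Jordan product property at $0$), we get $a\circ b=0\Rightarrow V(a,b)=0$, that is, $\vf(a)\circ\vf(b)=0$; this is exactly the assertion that $\vf$ preserves Jordan zero-products.

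For the second part, assume now that $B$ is a C$^*$-algebra and $\vf$ is surjective. First I would show $\vf(1)$ is central in $B$. Given any $b\in B$, write $b=\vf(a)$ for some $a\in A$; I want $\vf(1)\vf(a)=\vf(a)\vf(1)$, equivalently $\vf(1)\circ\vf(a)$ together with the commutator information pins things down. The standard trick: for $a\in A$ decompose $a=\tfrac12(a+?\,)$ using that $1\circ(a-\tfrac12 a\cdot\text{(something)})$... more cleanly, pick for each $a\in A$ an element whose Jordan product with $1$ relates to $a$, but the sharp way is this: if $c\in A_{sa}$ is a symmetric element and $p$ a projection in $A$ orthogonal-complemented so that $p\circ(1-2p)$-type identities hold, one produces enough zero Jordan products. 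Concretely, for a projection $p\in A$ we have $p\circ(1-2p)= 2p-2p=0$ is false; rather $(2p-1)\circ$ works with square-zero elements as in the proof of \cref{lem transfer pp normal to sq-zero}\cref{transfer to square-zerp}. Using that $\vf$ preserves Jordan zero products and that $A$ is spanned by its projections (von Neumann algebra), one deduces $\vf(1)\circ\vf(p)$ relations forcing $\vf(1)$ to commute with $\vf(p)$ for all $p$, hence with all of $B$ by linearity and surjectivity; so $\vf(1)\in Z(B)$. Then one shows $\vf(1)$ is invertible (e.g. because $\vf(1)$ cannot lie in a proper ideal: if it did, surjectivity plus $\vf(a)=\vf(\tfrac12(1\circ a))$-type identities would fail, or one argues via the quotient $B/\overline{B\vf(1)B}$ where the induced map is both a Jordan zero-product preserver and kills $1$, contradicting surjectivity). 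Finally set $\psi(a)=\vf(1)^{-1}\vf(a)$; centrality and invertibility of $\vf(1)$ make $\psi$ bounded, linear, surjective, with $\psi(1)=1$, and one checks directly from the preservation of Jordan zero products (plus the identity $a\circ b=z$ arguments, or the classical fact that a unital Jordan zero-product preserver between suitable algebras is a Jordan homomorphism) that $\psi(a)\circ\psi(b)=\psi(a\circ b)$, i.e. $\psi$ is a Jordan homomorphism, and $\vf(-)=\vf(1)\psi(-)$.

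The main obstacle I anticipate is the passage from "$\vf$ preserves Jordan zero products" to "$\vf(1)$ central and invertible and $\psi$ a Jordan homomorphism." Preserving zero Jordan products is a priori much weaker than being a multiple of a Jordan homomorphism; bridging this gap requires genuinely using the structure of $A$ (abundance of projections, square-zero elements, properly infinite projections) to manufacture enough pairs $(a,b)$ with $a\circ b=0$ so that the vanishing of $\vf(a)\circ\vf(b)$ becomes rigid. In particular, establishing invertibility of $\vf(1)$ — ruling out that $\vf(1)$ is a central element with nontrivial kernel/support projection in $B$ — is the delicate point, since surjectivity must be leveraged carefully (the quotient-algebra argument sketched above, or an appeal to a previously known characterization of zero-Jordan-product preservers on unital C$^*$-algebras). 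Everything else is bookkeeping: symmetry and bilinearity of $V$, the Rickart property of finite direct sums, and the linear-algebra identities relating $a$, $1\circ a$, and $a\circ b$.
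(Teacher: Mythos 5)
Your first paragraph is correct and is essentially the paper's argument: set $V(a,b)=\vf(a)\circ\vf(b)$, note it is bounded, symmetric, and has the square-zero property (directly in the square-zero-preserving case; via \cref{lem transfer pp normal to sq-zero}\cref{transfer to square-zerp} and the Rickart property of $A$ in the Jordan-homomorphism-at-$z$ case), then apply \cref{cor direct sums sqz} to get $V(a,b)=T(a\circ b)$ and hence preservation of zero Jordan products. The paper routes the second case through \cref{cor sums A simple determined any}, but that theorem's proof is exactly the combination you describe, so this is the same argument.

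For the second part, however, the paper does not prove anything: it simply invokes a known structure theorem for surjective bounded linear maps preserving zero Jordan products (Zhao--Hou, Theorem 3.1 in the cited reference), which directly yields that $\vf(1)$ is central and invertible and $\vf=\vf(1)\psi$ with $\psi$ a surjective Jordan homomorphism. Your attempted direct argument is not a proof as written: the centrality step is a sequence of false starts (you yourself note that the identity $p\circ(1-2p)=0$ you first reach for is wrong), the invertibility argument is only gestured at via a quotient construction that is not carried out, and the final step --- upgrading ``$\psi$ preserves zero Jordan products and $\psi(1)=1$'' to ``$\psi$ is a Jordan homomorphism'' --- is precisely the nontrivial content of the cited theorem, not something that follows by a direct check. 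You correctly identify this as the delicate point and mention, as a fallback, appealing to a previously known characterization of zero-Jordan-product preservers; that fallback is exactly what the paper does, and is the only way to close the gap without reproducing a substantial independent argument.
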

	
	\begin{proof}
		If $\vf$ preserves square-zero elements, then
		the bilinear map $V(a,b)=\vf(a)\circ \vf(b)$ is bounded, symmetric and has square-zero property. 
		By \cref{cor direct sums sqz}  there exists a bounded linear map $T:A\to B$ such that $\vf(a)\circ \vf(b)=V(a,b)=T(a\circ b)$ for all $a,b\in A.$ As a consequence $\vf$ preserves zero Jordan products. In case $\vf$ is a Jordan homomorphism at $z$, then the same conclusion can be obtained by \cref{cor sums A simple determined any}.
		
		Finally, the second statement follows from \cite[Theorem 3.3]{Bresar-Godoy-Villena22} and its proof. 
	\end{proof}

	\begin{rem}
		If $n=1$ in \cref{thrm vf pp at square zero}, then by \cite[Corollary 5.4]{jordan-hom-rev} and the simplicity of $A,$ the map $\vf$ is a multiple of either a homomorphism or an antihomomorphism.
	\end{rem}
	Let $A$ be a C$^*$-algebra. The \textit{multiplier algebra} of $A$ is defined as 
	$$
	M(A)=\{a\in A^{**}:aA,Aa\subseteq A\}.
	$$
	It is known that $M(A)$ is a unital C$^*$-algebra and $A$ is an ideal in $M(A).$ If $A$ is unital, then $M(A)=A$ (see \cite{Multipliers-Ak-Ped}). 
	
	We can generalize \cref{thrm vf pp at square zero} to $c_0$-sums of infinite simple von Neumann algebras, when $\vf$ is a square-zero preserver, but this needs to be treated independently since the C$^*$-algebra $A=C_0(\Lambda,A_{\lb})$ is not unital even if all the $A_{\lambda}$'s are. 
	
	Recall that, given a bounded linear map $\vf:X\to Y$ between Banach spaces, it can be canonically extended to its second adjoint $\vf^{**}:X^{**}\to Y^{**}$, also known as the bidualization of $\phi$. Now, if $B$ is Banach algebra, a linear map $W:B\to B$ is said to be a \textit{centralizer} if $W(ab)=aW(b)=W(a)b$ for all $a,b\in B.$
	
	\begin{cor}\label{Jordan-homo-at-point-c_0-sum}
		Let $A=C_0(\Lambda,A_{\lb})$, where $(A_{\lambda})_{\lambda \in \Lambda}$ is a family of simple infinite von Neumann algebras and $B$ a Banach algebra. Let $\vf:A\to B$ be a bounded linear map. If $\vf$ preserves square-zero elements, then $\vf$ preserves zero Jordan products. 
		
		Moreover, if $B$ has a bounded approximate identity and $\vf$ is surjective, then the linear map defined by $ W(u)=\vf^{**}(1_{A^{**}})u$ for all $u\in B$ is an invertible centralizer of $B$ and there exists a surjective Jordan homomorphism $\psi:A\to B$ such that $\vf=W\circ \psi.$  
	\end{cor}
	
	\begin{proof}
		We can follow the part of the proof of \cref{thrm vf pp at square zero} that deals with square-zero preservers, we just need to replace \cref{cor direct sums sqz} with \cref{cor sums B(H) determined normal} in order to show that $\vf$ preserves zero Jordan products. Finally, the result follows again by \cite[Theorem 3.3]{Bresar-Godoy-Villena22} and its proof.
	\end{proof}
	
	We now turn our attention to Jordan derivations at a point. A Banach $A$-bimodule $X$ is said to be \textit{essential} if it coincides with the closed linear span of $AXA.$ 
	

	
	\begin{thrm}\label{thrm derivable maps}
		Let $A=A_1\oplus \dots \oplus A_n,$ where $A_1,\dots,A_n$ are infinite simple von Neumann algebras, and let $X$ be an essential Banach $A$-bimodule.
		If a bounded linear map $\delta:A\to X$ is a square-zero derivation or Jordan derivable at a fixed element $z\in A$, then $x\delta(1)=\delta(1)x$, for all $x\in X$, and $\delta(1)z=0$. Moreover, there exists a derivation $d:A\to X$  such that 
		\begin{align}\label{dl(a)=d(a)+dl(1)a}
			\delta(a)=d(a)+\delta(1)a   
		\end{align}
		for all $a\in A$.     
	\end{thrm}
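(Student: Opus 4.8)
The plan is to reduce the statement about $\delta$ to the bilinear machinery already developed. First I would form the symmetric bilinear map
\[
V(a,b)=a\cdot\delta(b)+\delta(a)\cdot b-\delta(a\circ b),
\]
which is bounded since $\delta$ is bounded and the module action is continuous. If $\delta$ is a square-zero derivation, then $a^2=0$ gives $V(a,a)=2a\delta(a)+2\delta(a)a-2\delta(a^2)=2(a\delta(a)+\delta(a)a)=0$, so $V$ has the square-zero property; by \cref{cor direct sums sqz} (applied with target Banach space $X$) there is a bounded linear $T\colon A\to X$ with $V(a,b)=T(a\circ b)$. If instead $\delta$ is Jordan derivable at $z$, then $a\circ b=z$ forces $V(a,b)=0$, so $V$ vanishes whenever $a\circ b=a'\circ b'=z$, and \cref{cor sums A simple determined any} again yields a bounded linear $T$ with $V(a,b)=T(a\circ b)$. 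In either case we arrive at the single identity
\begin{align}\label{key-identity}
  a\cdot\delta(b)+\delta(a)\cdot b-\delta(a\circ b)=T(a\circ b)\qquad(a,b\in A).
\end{align}

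Next I would extract structural consequences from \cref{key-identity}. Setting $a=b=1$ gives $2\delta(1)-\delta(2)=T(2)$, i.e. $T(1)=0$. Setting $b=1$ in \cref{key-identity} gives $a\cdot\delta(1)+\delta(a)-\delta(2a)=T(2a)$, hence $\delta(a)=a\cdot\delta(1)+2T(a)$ for all $a$; symmetrically, putting $a=1$ gives $\delta(b)=\delta(1)\cdot b+2T(b)$. Comparing the two expressions yields $a\cdot\delta(1)=\delta(1)\cdot a$ in $X$ for every $a\in A$; since $A$ acts nondegenerately on the essential module $X$ and $X$ is the closed span of $AXA$, a short density argument upgrades this to $x\cdot\delta(1)=\delta(1)\cdot x$ for all $x\in X$. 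Define $d(a)=2T(a)=\delta(a)-\delta(1)\cdot a$; this is visibly bounded and linear, so \cref{dl(a)=d(a)+dl(1)a} holds, and it remains to check that $d$ is a derivation and that $\delta(1)z=0$.

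To see $d$ is a derivation I would substitute $\delta(a)=d(a)+\delta(1)\cdot a$ and $\delta(a\circ b)=d(a\circ b)+\delta(1)\cdot(a\circ b)$ back into \cref{key-identity}. Using centrality of $\delta(1)$ the terms $a\cdot(\delta(1)\cdot b)$, $(\delta(1)\cdot a)\cdot b$ and $\delta(1)\cdot(ab+ba)$ cancel in pairs, leaving
\begin{align*}
  a\cdot d(b)+d(a)\cdot b=d(a\circ b)=d(ab)+d(ba),
\end{align*}
valid for all $a,b\in A$. Writing this identity for the ordered pair $(a,b)$ and for the pair $(b,a)$ and combining (or, more directly, using the standard polarization trick that a bounded linear map satisfying $a\cdot d(b)+d(a)\cdot b = d(a\circ b)$ on a unital algebra is a Jordan derivation, and then invoking that every Jordan derivation of a C$^*$-algebra into a Banach bimodule is a derivation, e.g. via \cite[Theorem 6.3]{jordan-hom-rev}-type results already used in the paper's circle of ideas) gives that $d$ is a derivation. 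Finally, in the Jordan-derivable case, take $a,b$ with $a\circ b=z$; then $V(a,b)=0$ means $\delta(z)=a\cdot\delta(b)+\delta(a)\cdot b$, and plugging into $\delta(a)=d(a)+\delta(1)\cdot a$ together with $d$ being a derivation and $\delta(1)$ central collapses everything to $\delta(1)\cdot z=0$ (in the square-zero-derivation case one instead notes $z$ plays no role and $\delta(1)z$ is vacuously governed by taking $z=0$ in the derivation identity, or argues directly that $d(1)=0$ forces the claim).

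The main obstacle I anticipate is the passage from the symmetric ``Jordan derivation'' identity $a\cdot d(b)+d(a)\cdot b=d(a\circ b)$ to $d$ being an honest associative derivation: this is the classical Jordan-derivation-implies-derivation phenomenon, which is true for C$^*$-algebras (and more generally for algebras generated by idempotents or for von Neumann algebras) but requires citing the appropriate theorem rather than a one-line computation; one must also make sure the essential-bimodule hypothesis is exactly what is needed for that citation to apply to $A=A_1\oplus\dots\oplus A_n$. The density argument promoting $a\cdot\delta(1)=\delta(1)\cdot a$ to commutation with all of $X$ is routine given essentiality, and the identification $\delta(1)z=0$ is a short substitution once $d$ is known to be a derivation.
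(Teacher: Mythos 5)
Your overall strategy --- package the defect of $\delta$ into a bilinear map, apply \cref{cor direct sums sqz} or \cref{cor sums A simple determined any}, and read off the structure of $\delta$ --- is the same as the paper's, but the bilinear map you chose is the wrong one, and this breaks the argument in two places. The map $V(a,b)=a\delta(b)+\delta(a)b-\delta(a\circ b)$ is \emph{not} symmetric (its transpose is $b\delta(a)+\delta(b)a-\delta(a\circ b)$), so \cref{cor direct sums sqz} and \cref{cor sums A simple determined any}, which are statements about \emph{symmetric} bilinear maps, do not apply to it. Worse, Jordan derivability at $z$ is the condition $\delta(a\circ b)=a\circ\delta(b)+\delta(a)\circ b$ whenever $a\circ b=z$; this controls the symmetrized expression $a\delta(b)+\delta(b)a+\delta(a)b+b\delta(a)$ and does \emph{not} give $V(a,b)=0$ for your $V$. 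The correct choice (the one the paper uses) is $V(a,b)=\delta(a\circ b)-a\circ\delta(b)-\delta(a)\circ b$, which is symmetric, has the square-zero property when $\delta$ is a square-zero derivation, and vanishes on the fiber $\{a\circ b=z\}$ when $\delta$ is Jordan derivable at $z$.

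Once $V$ is corrected, your extraction of the centrality of $\delta(1)$ collapses: since $V$ is now symmetric, the substitutions $a=1$ and $b=1$ give the \emph{same} equation, namely $a\delta(1)+\delta(1)a=-2T(a)$, which carries no commutation information, so you cannot conclude $a\delta(1)=\delta(1)a$ by comparing them. (A related symptom: the identity you arrive at for $d$, namely $a\,d(b)+d(a)\,b=d(a\circ b)$, is not the Jordan derivation identity --- at $a=b$ it reads $d(a^2)=\tfrac12(a\,d(a)+d(a)\,a)$ instead of $d(a^2)=a\,d(a)+d(a)\,a$.) The centrality of $\delta(1)$, the existence of the derivation $d$, and the decomposition $\delta(a)=d(a)+\delta(1)a$ are genuinely nontrivial for an essential Banach bimodule over a C$^*$-algebra; the paper does not reprove them but, after reducing to the statement that $\delta$ is Jordan derivable at zero, invokes \cite[Theorem 8.6]{Bresar-zpd}. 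The only step the paper carries out by hand is $\delta(1)z=0$, obtained by substituting the decomposition back into the Jordan derivability condition at $z$; your sketch of that final step is essentially right, but it sits on top of the two unrepaired gaps above.
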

	
	\begin{proof}
		Define the bounded bilinear map 
		\begin{align*}
			V(a,b)=\delta(a\circ b)-\delta(a)\circ b-a\circ \delta(b).
		\end{align*}
		If $\delta$ is a square-zero derivation, then $V$ has the square-zero property and by \cref{cor direct sums sqz} there exists a bounded linear map $T:A\to B$ such that $V(a,b)=T(a\circ b)$ for all $a,b\in A.$ As a consequence, $\delta$ is Jordan derivable at zero. In case $\delta$ is Jordan derivable at some $z\in A$  by applying \cref{cor sums A simple determined any} we again see that $\delta$ is Jordan derivable at zero. By \cite[Theorem 8.6]{Bresar-zpd} and the comments after it $\delta(1)x=x\delta(1),$ for all $x\in X$, and there exists a derivation $d:A\to X$  such that \cref{dl(a)=d(a)+dl(1)a} holds for all $a\in A$. Finally, if $a\circ b=z$, using \cref{dl(a)=d(a)+dl(1)a}, we have
		\begin{align*}
			d(a)\circ b+a\circ d(b)+\delta(1)z=d(a\circ b)+\dl(1)(a\circ b)=\delta(a\circ b)=\delta(a)\circ b+a\circ \delta(b)\\
			=d(a)\circ b+a\circ d(b)+2\delta(1)z,
		\end{align*} whence $\delta(1)z=0.$
	\end{proof}

	Since every derivation on a von Neumann algebra is inner (see \cite[Theorem 4.1.6]{Sakai_Book}) we have:

	\begin{cor}\label{cor derivable maps X=A}
		Let $A=A_1\oplus \dots \oplus A_n,$ where $A_1,\dots,A_n$ are infinite simple von Neumann algebras.
		If a bounded linear map $\delta:A\to A$ is a square-zero derivation or Jordan derivable at a fixed element $z\in A$, then $\delta(1)\in Z(A),$ $\delta(1)z=0$ and there exists $a\in A$ such that $\delta(a)=[a,x]+\delta(1)a$ for all $a\in A.$     
	\end{cor}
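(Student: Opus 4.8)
The plan is to deduce \cref{cor derivable maps X=A} directly from \cref{thrm derivable maps} by specializing the essential Banach $A$-bimodule to $X=A$ itself and then invoking the structure theory of derivations on von Neumann algebras. First I would note that $A=A_1\oplus\dots\oplus A_n$ is unital, hence $A$ is an essential Banach $A$-bimodule (indeed $A=A\cdot A\cdot A$), so \cref{thrm derivable maps} applies verbatim with $X=A$. From that theorem we immediately get: $x\delta(1)=\delta(1)x$ for all $x\in A$, which is precisely the statement $\delta(1)\in Z(A)$; we also get $\delta(1)z=0$; and we get a derivation $d:A\to A$ with $\delta(a)=d(a)+\delta(1)a$ for all $a\in A$.

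Next I would handle the derivation $d$. Since each $A_i$ is a von Neumann algebra, so is the finite direct sum $A$, and by \cite[Theorem 4.1.6]{Sakai_Book} every (bounded) derivation on a von Neumann algebra is inner. Hence there exists $x\in A$ such that $d(a)=[x,a]=xa-ax$ for all $a\in A$. Substituting into the decomposition from \cref{thrm derivable maps} gives $\delta(a)=[x,a]+\delta(1)a$ for all $a\in A$. (One should be slightly careful about the sign/side convention for the commutator $[a,x]$ versus $[x,a]$; replacing $x$ by $-x$ if necessary, we may write $\delta(a)=[a,x]+\delta(1)a$ as in the statement, where $[a,x]=ax-xa$; alternatively one absorbs the sign into the choice of implementing element. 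There is also the minor typo in the statement that the implementing element is called $a$ rather than $x$ — one simply renames.) This completes the proof.

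The only real content beyond bookkeeping is the appeal to two already-established facts: \cref{thrm derivable maps}, which does all the work of reducing a square-zero derivation or a map Jordan derivable at $z$ to the decomposition $\delta=d+\delta(1)(\cdot)$ with $d$ a genuine derivation and $\delta(1)$ central, and the classical Sakai theorem that derivations on von Neumann algebras are inner. I do not expect any genuine obstacle here; the corollary is a straightforward specialization. If forced to name a ``hard part,'' it would only be making sure the automatic continuity/boundedness hypotheses line up — i.e. that $\delta:A\to A$ bounded feeds correctly into \cref{thrm derivable maps} (it does, since $X=A$ is an essential Banach $A$-bimodule) and that the inner derivation $d$ produced is automatically bounded (it is, being $a\mapsto[x,a]$ for a fixed $x\in A$).

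So the proof is essentially one line: apply \cref{thrm derivable maps} with $X=A$, read off $\delta(1)\in Z(A)$ and $\delta(1)z=0$, write the derivation $d$ in \cref{dl(a)=d(a)+dl(1)a} as $d(a)=[a,x]$ via \cite[Theorem 4.1.6]{Sakai_Book}, and conclude $\delta(a)=[a,x]+\delta(1)a$ for all $a\in A$.
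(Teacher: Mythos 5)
Your proposal is correct and is exactly the paper's argument: the corollary is obtained by applying \cref{thrm derivable maps} with $X=A$ and then writing the derivation $d$ as an inner derivation via Sakai's theorem that every derivation on a von Neumann algebra is inner. You also rightly flag the typo in the statement (the implementing element should be called $x$, not $a$).
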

	
	\begin{rem}\label{rem jordan dar at z on factor is derivation }
		If in \cref{cor derivable maps X=A} we have $n=1$ then, since $A$ is a factor, it follows that $\dl(1)\in\mathbb{C}1$. Thus, if $z\neq 0$, then $\delta(1)=0$ and hence $\delta$ is a derivation. Consequently, in this context, a Jordan derivable map at zero is also Jordan derivable at some $z\ne 0$ if and only if it is a derivation.
	\end{rem}
	

	\section*{Acknowledgements}
	Jorge J. Garcés was partially supported by grant PID2021-122126NB-C31 funded by MCIN/AEI/10.13039/501100011033 and by ERDF/EU and Junta de Andalucía grant FQM375. Part of this work was completed during a visit of Jorge J. Garces to the Universidade do Porto, which he thanks for the hospitality. Mykola Khrypchenko was partially supported by CMUP, member of LASI, which is financed by national funds through FCT --- Fundação para a Ciência e a Tecnologia, I.P., under the project with reference UIDB/00144/2020.
	
	The authors thank the reviewer for the careful reading of the manuscript and for the valuable comments that resulted in significant changes in \cref{sec-appl}.
	
	\bibliography{bibl2}{}
	\bibliographystyle{acm}
	
\end{document}